\newtheorem{theorem}{Theorem}
\newtheorem{lemma}{Lemma}
\newtheorem{claim}{Claim}
\newtheorem{remark}{Remark}
\newcommand*{\QED}{\hfill\ensuremath{\square}}
\title{Outer Billiards and Tilings of \\ the Hyperbolic Plane}
\author{\textsc{Filiz Dogru} - Grand Valley State University\\\textsc{Emily M. Fischer} - Harvey Mudd College \\ \textsc{Cristian Mihai Munteanu} - Jacobs University Bremen}
\date{}
\begin{document}
\maketitle
\begin{abstract}
In this paper we present new results regarding the periodicity of outer billiards in the hyperbolic plane around polygonal tables which are tiles in regular two-piece tilings of the hyperbolic plane.
\end{abstract}
\section{Introduction}

\indent \indent Outer billiards is a simple dynamical system that was  introduced  by B. H. Neumann in 1950s in \cite{neumann}. In the 1970s, J. Moser popularized outer billiards as a toy model for planetary motion as a means of finding possible unbounded orbits \cite{moser1,moser}. Since then, many mathematicians have asked and answered questions about outer billiards systems in various geometries. For example, in 2004 C. Culter proved the existence of periodic orbits for polygonal tables in the Euclidean Plane (the proof is presented by S. Tabachnikov in  \cite{culter}).  R. Schwarz answered, in the affirmative, Moser's question about the existence of unbounded orbits for certain polygons in \cite{schwartz1,schwartz}. 

The main motivation for this paper is a result of Vivaldi and Shaidenko [8] that in the Euclidean case, outer billiards associated to quasi-rational polygons have  all orbits  bounded, see also \cite{Ko,GS}. As a consequence, all orbits about a lattice polygon in the Euclidean plane are periodic.
We continue the work of Dogru and Tabachnikov in \cite{dogtab} who studied the relationship between one-tile regular tilings of the hyperbolic plane and the outer billiards system. 

 For a detailed account of hyperbolic geometry and the hyperbolic plane, we direct the reader to  \cite{hyperbolic}, and for a survey of outer billiards, see \cite{dogtab1,Tab}.

\section{Definitions}
\indent \indent The outer billiard map associated to a convex polygonal table $P$ in the hyperbolic plane is defined as follows. For a point $x\in\mathbb{H}^2\setminus P$, there are two lines that pass through $x$ and are tangent to the table $P$. By convention, we consider the tangent line for which $P$ is on the left, from the point of view of $x$. Then we reflect $x$ about the tangency (support) point to get $T(x)$ (See Figure \ref{definition}).  The map is well-defined whenever the tangency point is unique and so we are able to define the map $T$ on the entire hyperbolic plane except for the clockwise continuations of the sides of $P$ (see Figure \ref{definition}) and their preimages under $T$.  An immediate consequence of the definition is that $T$ is a piecewise isometry. 

Likewise, the inverse map $T^{-1}$ is not defined on the counterclockwise continuations of the sides of $P$.
We define the \textit{web} associated to $P$ to be the union of all preimages under $T$ of the clockwise continuation of the sides and of all preimages under $T^{-1}$ of the counterclockwise continuation of the sides. For each connected component of the complement of the web, the restriction of the map $T^n$ to that component is defined by a single isometry of the hyperbolic plane for every $n\in {\mathbb Z}$.  That means that each connected component of the complement of the web  maps as a whole under the iterations of $T$.
\begin{figure}
\centering
\includegraphics[scale=0.6]{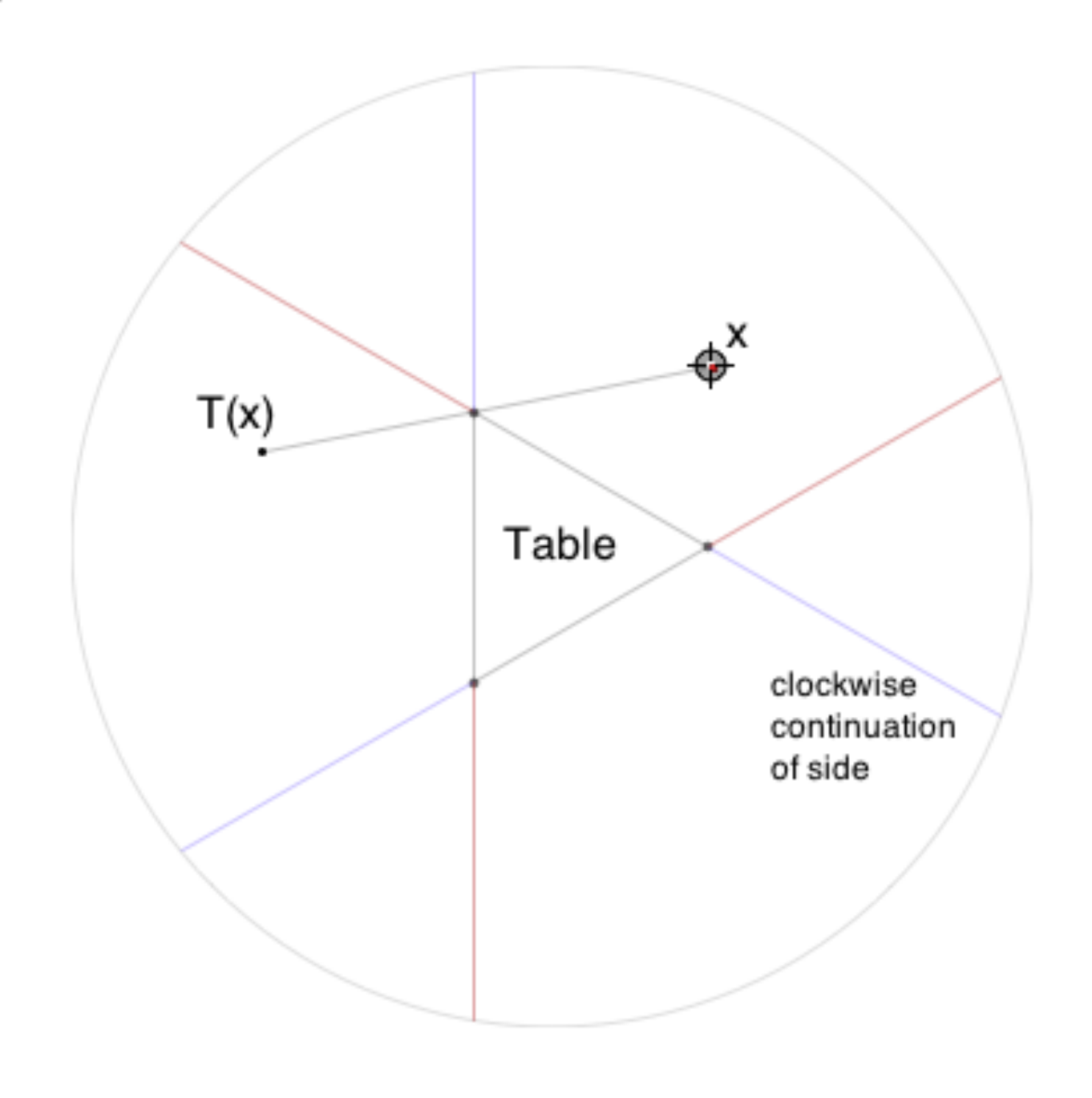}
\caption{Outer billiard map in Klein model}
\label{definition}
\end{figure}

\indent Another feature of the billiards map $T$ is that it extends continuously to a continuous circle map $t:S^1\to S^1$ at infinity. The map $t$ is defined using the same reflecting procedure. In this case the uniqueness of the support point is not needed, since the distance between our initial point and the support point is infinite no matter the choice and hence the map $t$ is well-defined for every point at infinity. Since $t$ is a circle map, it has a well defined Poincar\'{e} rotation number $\rho(t)$, and we will prove in section 3 that $\rho(t)$ encodes information about the combinatorial dynamics of the outer billiards.


\section{Outer Billiards on Tilings}
\indent \indent We are studying the hyperbolic outer billiards map associated with a polygonal table that is part of a two piece regular tiling of the hyperbolic plane. These tilings use two polygonal pieces, a regular $M$-gon and a regular $N$-gon that meet four in each vertex (See Figure \ref{mntiling}). We describe the combinatorial dynamics for outer billiards around one of the $M$-gons. We note that the web associated to such a map will fall exactly on the grid lines of the tiling. This is because the reflection around a vertex of the table tile is just a rotation by $180^{\circ}$ around vertices in the tiling. It follows that each tile maps as a whole under iterations of $T$.
\begin{figure}
\centering
\includegraphics[scale=0.3]{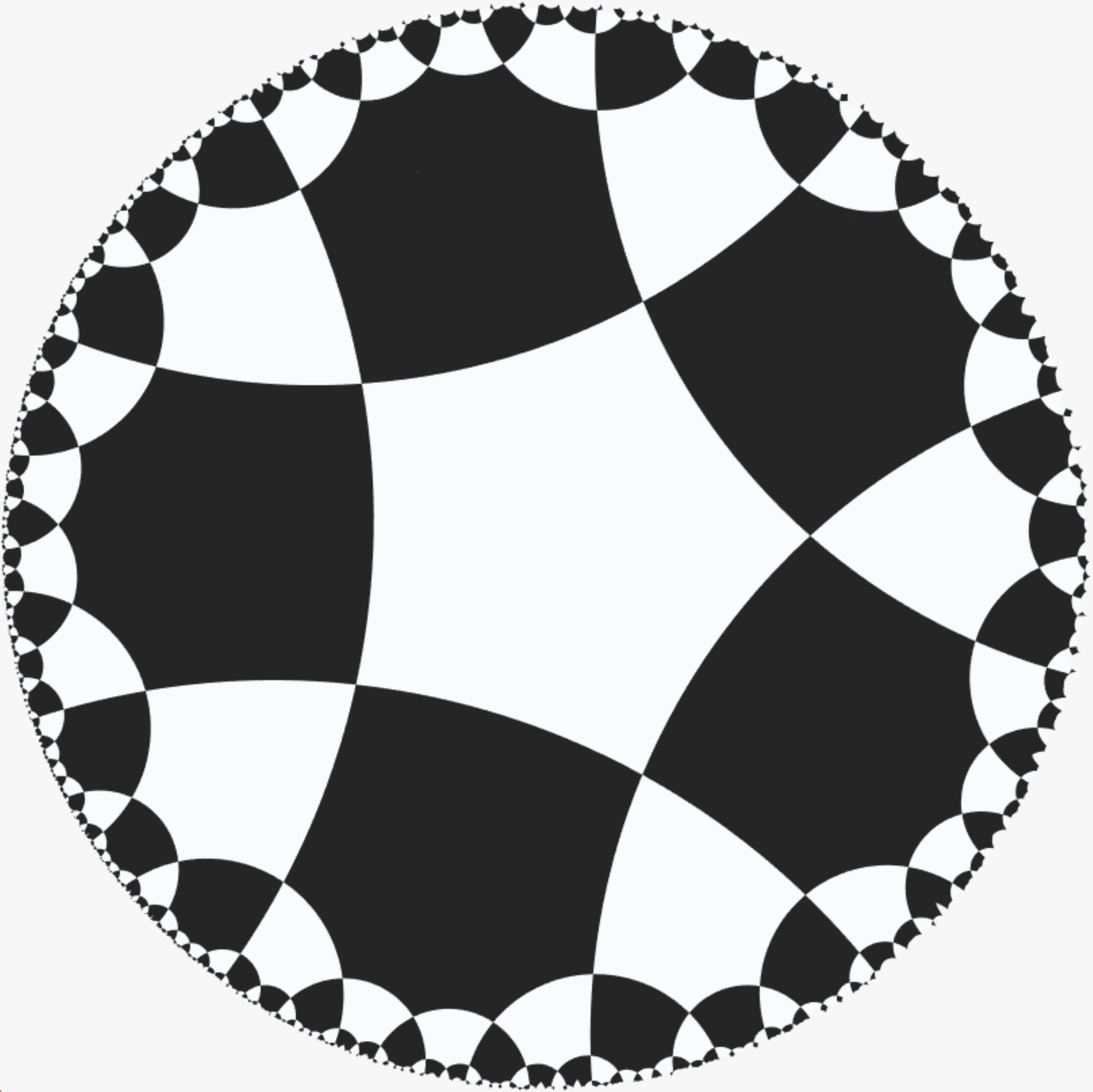}
\caption{Example of $(M,N)$-tiling for $(M,N)=(6,7)$}
\label{mntiling}
\end{figure}
\subsection{Previous Results}
\indent \indent Previous results describing outer billiards of tiles in the hyperbolic plane are obtained  in \cite{dogtab}. In this paper, the authors have proved that every orbit of the outer billiard map around a right-angled regular $n$-gon, for $n \geq 5$, is periodic. Any right-angled regular $n$-gon generates a tiling of the hyperbolic plane entirely consisting of $n$-gons. The theorems proven in the next sections have the same flavor as Theorem 4 in the above mentioned paper.

\indent Define the \textit{rank} of a tile as the minimum number of sides that one has to cross, when starting inside the table, to get to the given tile. This means that tiles that have one common side with the table have rank 1, and tiles that have a common side with a tile of rank 1 have rank 2, and so on. 
\begin{theorem}
(Dogru-Tabachnikov \cite{dogtab})
For a tiling of regular $n$-gons meeting in 4, $n\geq 5$, the dual billiard map $T$ preserves the rank of a tile, and every orbit of $T$ is periodic. The set of rank $k$ tiles consists of 
\[q_k=n\frac{\lambda_1^k-\lambda_2^k}{\lambda_1-\lambda_2}\]
elements, where
\[\lambda_{1,2}=\frac{n-2\pm\sqrt{n(n-4)}}{2}\]
are the roots of the equation $\lambda^2-(n-2)\lambda+1=0$. The action of $T$ on the set of rank $k$ tiles is a transitive cyclic permutation $i\mapsto i+p_k$ where 
\[p_k=\frac{\lambda_1^{k-1}-\lambda_2^{k-1}}{\lambda_1-\lambda_2}+\frac{\lambda_1^k-\lambda_2^k}{\lambda_1-\lambda_2}.\]
The rotation number of the dual billiard map at infinity is given by the formula 
\[\rho(t)=\lim_{k\to\infty}\frac{p_k}{q_k}=\frac{n-\sqrt{n(n-4)}}{2n}.\]
\end{theorem}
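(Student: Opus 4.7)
My plan has four stages mirroring the statement: (i) $T$ preserves rank and every orbit is periodic; (ii) the recursion and closed form for $q_k$; (iii) the cyclic shift $p_k$; and (iv) the rotation number $\rho(t)$. The key geometric input is that since $n$-gons meet four at a vertex, the interior angle of $P$ equals $\pi/2$, and hence the $180^{\circ}$ rotation $R_v$ about any vertex $v$ of $P$ is an isometry of the entire tiling. On each connected component of the complement of the web, $T$ agrees with one such $R_v$, namely the rotation about the support vertex for that component.

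For (i), fix a tile $Q$ lying in the support sector of $v$. The two rank-$1$ tiles adjacent to $P$ along the two edges of $P$ at $v$ are also adjacent to $R_v(P)$. I would argue that any dual-graph geodesic from $P$ to $Q$ can be rerouted through $R_v(P)$ with the same length, giving $d(P,Q)=d(R_v(P),Q)$. Since $R_v$ is a tiling isometry, $\operatorname{rank}(T(Q))=d(P,R_v(Q))=d(R_v(P),Q)=\operatorname{rank}(Q)$. Each rank level is finite, so every orbit is finite and hence periodic.

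For (ii), a local analysis at a vertex (four right-angled tiles sharing that vertex) shows that every rank-$k$ tile has a unique rank-$(k-1)$ parent and no rank-$k$ neighbor. Counting the children of each parent, splitting them into $n-3$ ``interior'' children and the two ``vertex'' children shared with its rank-$(k-1)$ neighbors, produces the recursion
\[
q_k=(n-2)q_{k-1}-q_{k-2},\qquad q_0=1,\ q_1=n,
\]
whose characteristic polynomial is exactly $\lambda^2-(n-2)\lambda+1=0$. Binet's formula then yields the stated closed form.

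For (iii), label the rank-$k$ tiles $0,1,\dots,q_k-1$ in cyclic order around $P$ and partition them into $n$ consecutive blocks, one per support sector of a vertex of $P$. Within a single block, $T$ acts by a single rotation $R_v$, and rank preservation forces the images to land again in the rank-$k$ cycle; globally these pieces assemble into a rigid cyclic shift $i\mapsto i+p_k\pmod{q_k}$. An induction driven by the same local picture as in (ii) identifies the shift as $p_k=u_{k-1}+u_k$, where $u_k=(\lambda_1^k-\lambda_2^k)/(\lambda_1-\lambda_2)$. The relation $\gcd(u_{k-1},u_k)=1$ (immediate from the recursion) together with $u_{k-1}+u_k\equiv\pm1\pmod n$ (following from $u_k\equiv(-1)^{k-1}k\pmod n$) gives $\gcd(p_k,q_k)=1$, so $T$ acts by a single $q_k$-cycle. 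Finally, for (iv), $\lambda_1\lambda_2=1$ and $|\lambda_2|<1$ let the $\lambda_2^k$ terms drop out in the limit, yielding
\[
\rho(t)=\lim_{k\to\infty}\frac{p_k}{q_k}=\frac{1+\lambda_2}{n}=\frac{n-\sqrt{n(n-4)}}{2n}.
\]
The hardest step will be (iii): verifying that the concatenation of local rotations $R_v$ assembles into a single rigid cyclic shift, and pinning down the precise shift, demands careful bookkeeping of how the cyclic order of rank-$k$ tiles transitions across the boundary between consecutive support sectors.
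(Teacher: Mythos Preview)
The paper does not actually prove Theorem~1: it is quoted as a prior result of Dogru--Tabachnikov, with only the one-line summary ``geometric arguments for the periodicity of orbits and recurrence formulas for computing the number of tiles'' and a pointer to \cite{dogtab}. So strictly speaking there is no in-paper proof to compare against. However, the paper does prove the analogous Theorems~2 and~3 in full, and those arguments specialize to the right-angled $n$-gon case, so one can compare your outline to those.

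Your overall plan matches the paper's template: rank preservation $\Rightarrow$ periodicity, a linear recursion for the layer counts, the cyclic-shift description, and the limit for $\rho(t)$. Two differences are worth flagging. First, for rank preservation the paper (Lemma~1) argues by induction on the rank---each rank-$k$ tile is edge-adjacent to a rank-$(k-1)$ tile lying in the same support sector, so they move together under a single $R_v$---rather than by your geodesic-rerouting claim $d(P,Q)=d(R_v(P),Q)$, which would itself need justification. Second, and more importantly for the step you correctly identify as hardest, the paper's Lemma~2 bypasses your block-assembly bookkeeping entirely: one shows directly that \emph{any two adjacent rank-$k$ tiles map to adjacent rank-$k$ tiles}. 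The only nontrivial case is when the pair straddles a clockwise side-continuation, and there a one-line distance computation along that side shows the two images still share a point. This immediately gives that $T$ is a cyclic shift on each layer, with no need to analyze how sectors glue. Your $\gcd$ argument for transitivity (via $u_k\equiv(-1)^{k-1}k\pmod n$, hence $p_k\equiv\pm1\pmod n$ and $\gcd(u_{k-1},u_k)=1$) is a genuine addition; the paper does not isolate this point in its own proofs.

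One small error to fix: the initial condition $q_0=1$ is incompatible both with the closed form $q_k=n\,(\lambda_1^k-\lambda_2^k)/(\lambda_1-\lambda_2)$ (which forces $q_0=0$) and with your recursion, which from $q_0=1$, $q_1=n$ would give $q_2=n(n-2)-1$ rather than the correct $q_2=n(n-2)$. Either take $q_0=0$ formally, or seed the recursion with $q_1=n$ and $q_2=n(n-2)$.
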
 
The proof of this theorem uses geometric arguments for the periodicity of orbits and recurrence formulas for computing the number of tiles in each rank and the rotation number of $t$ (see \cite{dogtab} for details). The authors make an important remark that the representation of $\lambda_1$ (and so the rotation number of the map at infinity) as a continued fraction encodes the dynamics of the tiles under the billiard map $T$. We will deduce  similar results for two-piece tilings.
\subsection{New Results}
\indent \indent Our results extend Theorem 1 to two-piece regular tilings of the hyperbolic plane. We will denote a tiling of regular $M$-gons and regular $N$-gons as an $(M,N)$-tiling, and we will always consider the table to be an $M$-gon. Such an $(M,N)$-tiling exists if $\frac{1}{M}+\frac{1}{N}<\frac{1}{2}$. As mentioned earlier, these tilings have four shapes meeting at each vertex, two $M$-gons and two $N$-gons.
\subsubsection{Triangles and $N$-gons}\label{sec:3-N}
\indent \indent Most of the geometric arguments used here are analogous to those used by Dogru and Tabachnikov. Our counting arguments are different, although they are also based on recurrence relations. 

Let us introduce a more general notation for rank in order to avoid cumbersome indexing. Observe that the layer of tiles of rank $k$ includes tiles of the same type (all $M$-gons or all $N$-gons) and as rank changes by one, that shape changes. So triangles always have even rank and $N$-gons always have odd rank. We will say that a rank $2k-1$ tile is a rank $k$ $N$-gon and a rank $2k$ tile is a rank $k$ triangle. The rest of this section is dedicated to describing the dynamics of the billiard map $T$ in the $(3,N)$-tilings through the proof of the following theorem:
\begin{theorem}\label{3n}
For a $(3,N)$-tiling, $N\geq 7$, the outer billiard map $T$ preserves the rank of a tile and every orbit of $T$ is periodic. The set of rank $k$ $N$-gons consists of 
\[q_k=\frac{1}{\sqrt{N-6}}(\Phi_1^{2k-3}+\Phi_2^{2k-3})+\Phi_1^{2k-2}+\Phi_2^{2k-2}\]
elements and the set of rank $k$ triangles consists of
\[l_k=\frac{N-4}{\sqrt{N-6}}(\Phi_1^{2k-3}+\Phi_2^{2k-3})+(N-3)(\Phi_1^{2k-2}+\Phi_2^{2k-2})\]
elements, where
$$\Phi_{1,2}=\frac{\sqrt{N-6}\pm\sqrt{N-2}}{2}$$
are the two roots of the equation $$\Phi^2-\sqrt{N-6}\Phi-1=0.$$ The action of $T$ on the set of rank $k$ $N$-gons is a cyclic permutation $i\mapsto i+p_k$ where 
\[p_k=\frac{q_k}{3}+\frac{\Phi_1^{2k-4}-\Phi_2^{2k-4}}{\sqrt{(N-6)(N-2)}}+\frac{\Phi_1^{2k-3}-\Phi_2^{2k-3}}{\sqrt{N-2}},\]
and the action of $T$ on the set of rank $k$ triangles is also a cyclic permutation $i\mapsto i+j_k$ where
\[j_k= \frac{l_k}{3}+(N-4)\frac{\Phi_1^{2k-4}-\Phi_2^{2k-4}}{\sqrt{(N-6)(N-2)}}+(N-3)\frac{\Phi_1^{2k-3}-\Phi_2^{2k-3}}{\sqrt{N-2}}.\]
The rotation number of the outer billiard map at infinity is given by the formula 
\[\rho(t)=\lim_{k\to\infty}\frac{p_k}{q_k}=\lim_{k\to\infty}\frac{j_k}{l_k}=\frac{1}{3}+\frac{1}{3(1+\Phi_1^2)}=\frac{1}{3}+\frac{1}{3\sqrt{N-2}\Phi_1}.\]
\end{theorem}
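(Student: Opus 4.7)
The strategy parallels Theorem~1, with the main novelty appearing in the counting arguments, which must now track two interleaved families of tiles rather than one.

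On each connected component of the complement of the web, $T$ is a single isometry whose center of reflection is a vertex $v$ of the triangular table; this isometry is therefore the $180^\circ$ rotation about $v$.  Because exactly four tiles meet at each vertex of the $(3,N)$-tiling, alternating between triangles and $N$-gons, such a rotation permutes the tiles of the tiling.  A straightforward induction on the rank---directly analogous to the argument in \cite{dogtab}---shows that the image of a rank-$k$ tile is again of rank $k$: the rotation permutes the combinatorial annulus of rank-$k$ tiles around the table.  Since each rank-$k$ set is finite, periodicity of every orbit is immediate.

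Counting $q_k$ and $l_k$ is the main new work.  In the $(3,N)$-tiling every edge separates a triangle from an $N$-gon, so a rank-$k$ $N$-gon is adjacent only to triangles of rank $k-1$ or $k$, and a rank-$k$ triangle only to $N$-gons of rank $k$ or $k+1$.  I classify the tiles of each rank by the number of edges pointing inward versus outward, then double-count the edges between consecutive layers.  This produces a linear recurrence relating the refined counts at rank $k+1$ to those at rank $k$, whose characteristic equation, after the substitution $\mu=\Phi^{2}$, reduces to $\mu^{2}-(N-4)\mu+1=0$ with roots $\mu_{1,2}=\Phi_{1,2}^{2}$.  Solving the recurrence with initial values computed directly from the local geometry around the table yields the stated closed forms for $q_k$ and $l_k$.

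Given the counts, the cyclic shifts $p_k$ and $j_k$ are obtained by tracking a single tile through one application of $T$.  I fix a rank-$k$ tile $\tau_0$ adjacent to a chosen vertex $v$ of the table; then $T(\tau_0)$ is obtained from $\tau_0$ by $180^\circ$ rotation about $v$, and I count how many rank-$k$ tiles lie between $\tau_0$ and $T(\tau_0)$ in the cyclic order around the table.  The three-fold symmetry of the table contributes the $q_k/3$ and $l_k/3$ terms, while a careful accounting of the tiles adjacent to $v$ on the opposite side of the table supplies the remaining corrections involving $\Phi_1^{2k-3}\pm\Phi_2^{2k-3}$.  The rotation number at infinity then follows by passing to the limit $\rho(t)=\lim_{k\to\infty} p_k/q_k=\lim_{k\to\infty} j_k/l_k$; because $|\Phi_2|<1<\Phi_1$ only the $\Phi_1$-contributions survive, and the resulting quotient simplifies to $\tfrac13+\tfrac{1}{3(1+\Phi_1^2)}$, which the identity $1+\Phi_1^{2}=\sqrt{N-2}\,\Phi_1$ (a consequence of $\Phi_1^{2}=\sqrt{N-6}\,\Phi_1+1$) rewrites as $\tfrac{1}{3\sqrt{N-2}\,\Phi_1}$.

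The hardest part is the counting step.  The geometric rank-preservation argument is essentially inherited from \cite{dogtab}, but the bookkeeping for $q_k$ and $l_k$ involves several distinct types of rank-$k$ tile distinguished by their inward-outward edge pattern; showing that this multi-type system collapses onto a scalar recurrence with the stated characteristic polynomial---while keeping the initial values consistent across both shapes---is the delicate step most likely to hide computational errors.
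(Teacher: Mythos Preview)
Your outline matches the paper's architecture: rank preservation inherited from \cite{dogtab}, a two-type classification of rank-$k$ $N$-gons feeding a $2\times 2$ linear recurrence with eigenvalues $\Phi_{1,2}^{2}$, and the rotation number as $\lim p_k/q_k$.  Two points deserve sharpening against what the paper actually does.

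First, where you speak of classifying tiles by inward/outward edge pattern and then double-counting edges, the paper instead sets up an explicit substitution system (a ``crochet pattern''): $N$-gons are of type $X$ (two parent $N$-gons) or type $Y$ (one parent), and the replacement rules $X\to XY^{N-6}$, $Y\to XY^{N-5}$ give the matrix $\left(\begin{smallmatrix}1&1\\ N-6&N-5\end{smallmatrix}\right)$ directly, with initial data $(x_2,y_2)=(3,3(N-4))$.  This is equivalent to your edge-counting in principle, but it also feeds cleanly into the triangle count via $l_k=(N-4)x_k+(N-3)y_k$.

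Second, and more substantively, your description of $p_k$ is too vague to succeed as written.  The correction to $q_k/3$ is not a local count of tiles ``adjacent to $v$ on the opposite side''---it grows like $\Phi_1^{2k-3}$.  The paper's device is a \emph{small cone / big cone} decomposition: the three extensions of the table's sides cut the plane into three small cones (opposite vertices) and three big cones (opposite edges); one application of $T$ carries a tile across two small cones and one big cone, so $p_k=q_k/3+s_k$ with $s_k$ the number of rank-$k$ $N$-gons in a single small cone.  The key observation is that a small cone is self-similar---its apex layer is a single $X$-type $N$-gon---so $s_k$ is obtained by summing powers of the same transition matrix applied to $(1,0)^{\!T}$.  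Without this self-similarity idea your ``careful accounting'' has no mechanism to produce the closed form for $p_k$ (and hence $j_k$).
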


Theorem 2 contains many independent results  and for reasons of clarity we will prove them one by one as claims inside the proof.

\begin{claim}
Every orbit of $T$ is periodic.
\end{claim}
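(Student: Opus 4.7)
My plan follows the three-step strategy of Theorem 1. First I would observe that $T$ permutes the tiles of the $(3,N)$-tiling: the paragraph preceding Theorem 2 already notes that the web coincides with the grid lines of the tiling, so every tile lies in a single connected component of the complement of the web and is moved as a whole by some hyperbolic isometry. Consequently a point orbit is contained in the orbit of its tile, and to prove periodicity of every orbit it suffices to show that each tile orbit is finite.

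The core of the proof is a rank-preservation statement: I would show that $T$ sends each rank-$k$ $N$-gon to a rank-$k$ $N$-gon and each rank-$k$ triangle to a rank-$k$ triangle. The geometric idea is the same as in \cite{dogtab}. The clockwise continuations of the three sides of $P$ divide $\mathbb{H}^2\setminus P$ into three sectors, one per vertex $v$ of $P$, and on each sector $T$ acts as the rotation $R_v$ by $180^\circ$ about $v$. Since four tiles meet at every tiling vertex in the alternating pattern $M,N,M,N$, each $R_v$ is a global symmetry of the $(3,N)$-tiling and in particular preserves tile type. Because edges of the tiling always separate an $M$-gon from an $N$-gon, tile type alternates along any tile path, so the parity of the original rank determines the type; tile type together with the new rank then recovers the original rank. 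A local analysis near $v$ identifies how $R_v$ matches the $k$-th ring of tiles in the sector at $v$ with the $k$-th ring in the neighboring sector, and once this matching is verified for both tile types the generalized rank is preserved under $T$.

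Finiteness of each rank class then follows by induction on the original rank $r$: a tile of rank $r+1$ must share an edge with some tile of rank $r$, and each tile has only finitely many edges, so there are finitely many tiles of any given rank (the base case being the three rank-$1$ $N$-gons adjacent to $P$). Since $T$ is invertible off the web and preserves rank, it acts on each finite rank class as a bijection, hence as a permutation of a finite set, which decomposes into disjoint cycles. Every tile orbit is therefore periodic, and so is every point orbit.

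The main obstacle I expect is the rank-preservation step. In the one-piece setting of \cite{dogtab} the single tile shape makes the matching between adjacent sector-rings automatic; here one must verify the matching separately for $N$-gons and for triangles, and carefully track how the ``opposite'' triangle at each vertex of $P$ interleaves the two rings relative to the rotation $R_v$. This is where genuinely new case analysis, rather than a transcription from \cite{dogtab}, will be needed.
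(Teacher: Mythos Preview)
Your overall three-step architecture---tiles move as wholes, rank is preserved, finitely many tiles per rank---matches the paper. Two points are worth noting.

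First, your reduction ``to prove periodicity of every orbit it suffices to show that each tile orbit is finite'' is not justified as stated, and the paper in fact supplies the missing argument. If $T^{m}$ sends a tile $\tau$ to itself, this only says that $T^{m}|_{\tau}$ is a self-isometry of the regular polygon $\tau$; individual points of $\tau$ need not be fixed. The paper observes that this self-isometry must be a rotation by $2\pi j/N$ (for an $N$-gon) or $2\pi j/3$ (for a triangle), so a further iterate $T^{mN}$ or $T^{3m}$ is the identity on $\tau$. You should insert this step explicitly; without it the conclusion ``and so is every point orbit'' does not follow.

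Second, for rank preservation the paper takes a shorter route than yours. Rather than invoking that each $R_v$ is a global tiling symmetry and then attempting a sector-by-sector ring-matching, it argues by induction on rank: the rank~$1$ tiles are visibly permuted by $T$, and every rank~$k$ tile shares an edge with a rank~$k-1$ tile in such a way that the two are reflected through the \emph{same} vertex of $P$, hence land on an adjacent pair of tiles of ranks $k$ and $k-1$ respectively. This adjacency induction sidesteps exactly the ``local analysis near $v$'' that you flag as the main obstacle, and it works uniformly without separating the $N$-gon and triangle cases. Your symmetry idea is sound in spirit---$R_v$ is indeed a tiling automorphism---but since $R_v$ does not fix the table $P$, it does not automatically preserve distance-to-$P$, so the ring-matching you propose still needs the same inductive content; the paper's formulation simply packages it more efficiently.
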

\begin{proof}
The proof of this  result is written in much detail in \cite{dogtab}. We will present here a sketch of it and will refer the reader to \cite{dogtab} for detailed explanations.
The statement of the claim is a consequence of the following lemma:
\begin{lemma} \label{rankpreserve} The rank of a tile is preserved under $T$.
\end{lemma}
\noindent\textit{Proof of lemma.} The proof is  by  induction  on the rank, based on geometrical observations. Observe that rank 1 tiles are preserved by $T$ and notice that every rank $k$ tile is adjacent to a rank $k-1$ tile, where these two tiles map together under a single application of $T$. These two facts complete the base case and the step of the induction. \QED

From Lemma \ref{rankpreserve}, since there are finitely many tiles of rank $k$, every tile must eventually map back to itself after $m$-iterations, for some natural number $m$. Hence the $m$-th iteration of $T$ maps the entire  tile to itself. This  implies that $T^{\circ m}$ is a rotation by either $\frac{2\pi j}{N}$ (for $N$-gons) or $\frac{2\pi j}{3}$(for triangles) around some point inside the tile. Hence $T^{\circ Nm}$ restricted to that tile is the identity if the tile is an $N$-gon and $T^{\circ 3m}$ restricted to that tile is the identity if the tile is a triangle. We conclude that every orbit of $T$ is periodic.
\end{proof}    

\begin{claim}
For every $k\geq 1$, $T$ permutes the rank $k$ tiles cyclically.
\end{claim}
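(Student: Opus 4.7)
The plan is to endow the set of rank $k$ tiles (of each type) with a canonical cyclic order and prove that $T$ acts on this order as a shift, using the orientation-preserving nature of $T$ and its extension $t$ to the circle at infinity.

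First I would define the cyclic order. For each rank $k$ tile $X$, let $I_X \subset S^1$ be the arc of points at infinity visible through $X$ from the interior of the table (equivalently, the arc at infinity cut out by the two boundary geodesics of $X$ that face away from the table). Since tiles of a given rank are pairwise disjoint and all of the same type, the arcs $\{I_X\}$ for $X$ ranging over rank $k$ tiles of a fixed type have pairwise disjoint interiors, and thereby inherit a cyclic order from $S^1$.

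Next I would show that $T$ preserves this cyclic order. Every elementary application of $T$ is a hyperbolic half-turn about a vertex of the table, which is an orientation-preserving isometry of $\mathbb{H}^2$; the continuous circle extension $t : S^1 \to S^1$ from Section 2 is therefore an orientation-preserving homeomorphism. The naturality $t(I_X) = I_{T(X)}$ follows from the fact that $T|_X$ is realized by a single hyperbolic isometry carrying the boundary geodesics of $X$ to those of $T(X)$, and this isometry agrees with $t$ on the relevant boundary points at infinity. Because $t$ is an orientation-preserving homeomorphism of $S^1$, it preserves the cyclic order on the disjoint arcs $\{I_X\}$, so the induced permutation of rank $k$ tiles is an order-preserving bijection of a cyclically ordered finite set. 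Such a bijection is necessarily a rotation of the form $i \mapsto i + p_k \pmod{q_k}$, which is the cyclic action claimed by Theorem \ref{3n}.

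The main obstacle, and the step requiring the most care, is rigorously verifying the identity $t(I_X) = I_{T(X)}$: this compatibility ties together the pointwise definition of $t$ at infinity (via tangent reflection) with the tile-level action of $T$, and would require checking that tangent rays to $X$ from distant points behave consistently with the boundary extension of the specific isometry realizing $T|_X$. If in addition one wants to conclude that the rotation is a single cycle (i.e.\ that $\gcd(p_k, q_k) = 1$), a supplementary induction on $k$ is needed: the base case $k = 1$ for $N$-gons is immediate from the three-fold rotational symmetry of the triangle table, and transitivity would propagate through the rank $k-1$ / rank $k$ adjacency already exploited in the proof of Lemma \ref{rankpreserve}.
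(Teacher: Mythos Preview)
Your approach is genuinely different from the paper's. The paper does not use the boundary map $t$ at all for this claim; instead it proves a local lemma: \emph{any two consecutive rank $k$ tiles are mapped by $T$ to two consecutive rank $k$ tiles}. When the two tiles lie in the same sector (not separated by a clockwise continuation of a side) they are reflected through the same vertex and adjacency is trivially preserved; when they are separated by such a continuation they are reflected through adjacent vertices $O_1,O_2$, and a one-line distance computation (essentially $A'O_2=BO_2=B'O_2$ using $AB=O_1O_2$) shows that their common boundary point has the same image under both reflections. From ``consecutive $\mapsto$ consecutive'' on a finite cyclically ordered set one gets that $T$ is a shift $i\mapsto i+p_k$, exactly as you conclude.

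The gap you flag in your own proposal is not a technicality but the whole difficulty. The identity $t(I_X)=I_{T(X)}$ would require that for every $p\in I_X$ the support vertex selected by $t$ at $p$ coincides with the vertex through which $T$ reflects the tile $X$. But $t$ chooses its vertex according to which \emph{infinity-sector} $p$ lies in, while $T$ chooses according to which \emph{plane-sector} $X$ lies in, and for tiles abutting a continuation line (precisely the interesting case) there is no a priori reason these agree on all of $I_X$. Resolving this amounts to analysing what happens across a continuation line, which is exactly the explicit computation the paper carries out at the tile level; so your route, once made rigorous, would essentially reproduce the paper's argument in disguise rather than bypass it. Your final paragraph on transitivity ($\gcd(p_k,q_k)=1$) is a reasonable addendum, but note that the claim as stated (and as used in Theorem~\ref{3n}) only asserts a shift, not a single cycle.
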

\begin{proof}
This claim is an immediate corollary to the following lemma:
\begin{lemma}
Any two consecutive rank $k$ tiles are mapped to two consecutive rank $k$ tiles.
\end{lemma}
\noindent\textit{Proof of lemma.} We  know by Lemma \ref{rankpreserve} that the rank of two tiles is preserved. If the two consecutive tiles are not separated by a clockwise continuation of one of the sides of the table then their common point is mapped, together with the two tiles, through the same vertex. Thus the tiles are mapped to two consecutive tiles. 

If the two tiles are separated by such a continuation of one side of the table then  the argument is more involved. A similar argument is presented  in \cite{dogtab}. Figure 3 gives a pictorial representation of the situation. The first tile is reflected in $O_1$, while the second one is reflected in $O_2$. What remains to prove is that $A^{\prime}$=$B^{\prime}$ so that the images of the two tiles still touch in one point. The following sequence of equalities completes the proof:
\[ A^{\prime} O_2=A{^\prime}O_1-O_1O_2=BO_1+AB-O_1O_2=BO_1+O_1O_2=BO_2=B^{\prime}O_2.\]

\begin{figure}
\includegraphics[scale=0.7]{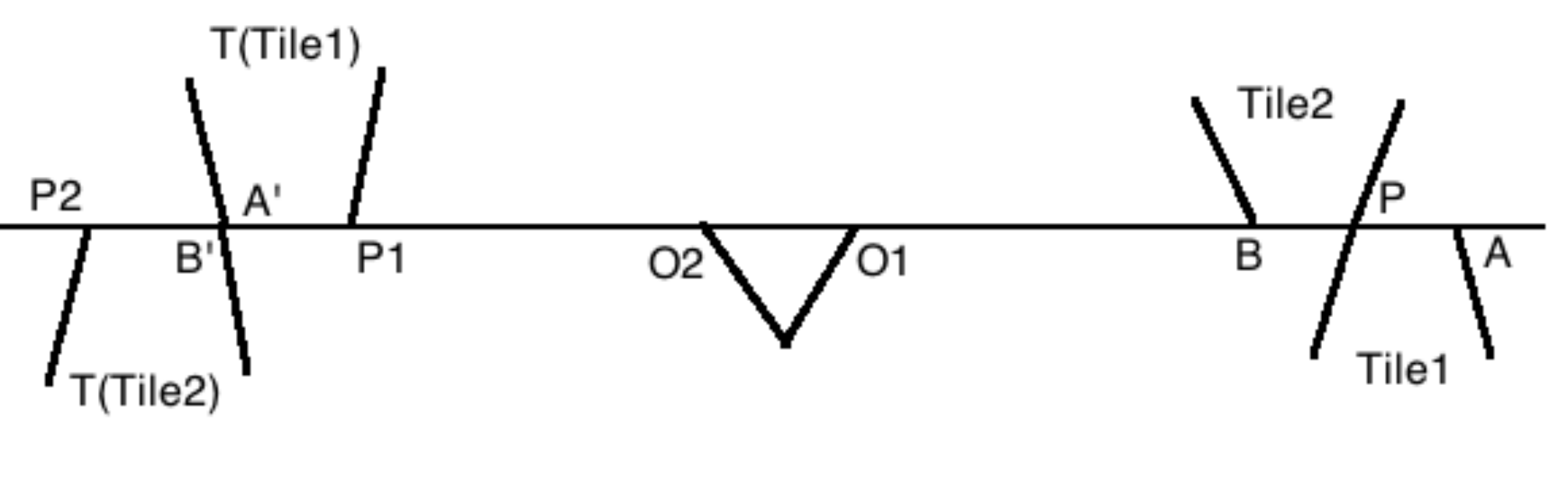}
\caption{Special case for lemma 2}
\end{figure}
\end{proof}

In order to  compute the formulas for $q_k,p_k,j_k,l_k$ we first explain why the tiling we are working with has an intrinsic self-similar geometric structure. We will refer from now on to this self-similar structure as the \textit{crochet pattern}. To describe the crochet pattern, we consider $N$-gons to be of two types (See figure \ref{37tiling}), $X$-type and $Y$-type. Type $X$ $N$-gons have two \lq\lq parents"  in the sense that they touch two $N$-gons of the previous rank, while type $Y$ $N$-gons touch only one  \lq\lq parent". The rank 1 $N$-gons are of neither of the types, having 0 parents, so we call them type \lq 0' $N$-gons. (This is why our counting argument begins with counting rank $2$ $N$-gons.) 

The following claim gives an intuitive explanation of why we  call this self-similar structure of the tiling a crochet pattern.
\begin{claim}
When passing from the $k$-th layer of $N$-gons to the $k+1$-th layer of $N$-gons, we apply the following replacement rules:
\[X\to XY^{N-6} \]
\[ Y\to  XY^{N-5} \]
i.e., when incrementing rank of the layer by 1, every $X$ gets replaced by an $X$ followed by $N-6$ $Y$'s, and every $Y$ gets replaced by an $X$ followed by $N-5$ $Y$'s.
\end{claim}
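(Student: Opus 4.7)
The plan is to read off the substitution rules by analyzing, around a single rank $k$ $N$-gon $P$, how many rank $k+1$ $N$-gons touch $P$ and of which type each one is. The basic framework is this: at every vertex of the $(3,N)$-tiling, two $N$-gons and two triangles meet in alternating cyclic order, so at each of the $N$ vertices of $P$ there is a well-defined ``across'' $N$-gon $P^*$. Since $P^*$ is side-adjacent to the two triangles at that vertex that are themselves side-adjacent to $P$, its rank must lie in $\{k-1,k,k+1\}$. This partitions the vertices of $P$ into parent vertices (rank of $P^*$ is $k-1$), sibling vertices (rank $k$), and child vertices (rank $k+1$).

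The first key step is a structural lemma, proved by induction on $k$: for an $X$-type $P$, the two parent vertices are the two endpoints of the single side of $P$ shared with the corner rank $k-1$ triangle that joins its two parents, hence consecutive on $\partial P$; for a $Y$-type $P$ there is a single parent vertex. In either case this parent arc is flanked on both sides by exactly one sibling vertex, namely the vertex at which $P$ meets its neighbor in the rank $k$ ring via the shared outer vertex of a non-corner rank $k-1$ triangle. Thus the number of child vertices of $P$ is $N-4$ if $P$ is $X$-type and $N-3$ if $P$ is $Y$-type.

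The next step classifies these children. At the shared vertex of two consecutive rank $k$ siblings $P$ and $P'$, the four local tiles are $P$, $P'$, one rank $k-1$ triangle (the one that made them siblings) and, across from it, a rank $k$ triangle $T$. Since $T$ is side-adjacent to both $P$ and $P'$, it is a corner rank $k$ triangle, and the unique $N$-gon on its third side is a rank $k+1$ $N$-gon having both $P$ and $P'$ as parents, hence $X$-type. This $X$-type child occupies the child vertex of $P$ immediately adjacent, in cyclic order, to the sibling vertex, and analogously for $P'$. So exactly two of $P$'s child vertices, namely the two neighboring its sibling vertices, give $X$-type children shared with a sibling, while all remaining child vertices give $Y$-type children belonging to $P$ alone: $N-6$ of them when $P$ is $X$-type, $N-5$ when $P$ is $Y$-type.

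To conclude, I would attribute each shared $X$-child to just one of its two parents, say the one appearing first in the cyclic order of the rank $k$ ring. Then each rank $k$ $N$-gon contributes a consecutive block to the rank $k+1$ sequence consisting of its attributed $X$ followed by its unique $Y$-children: $XY^{N-6}$ for $X$-type and $XY^{N-5}$ for $Y$-type, which is exactly the substitution asserted in the claim. I expect the main obstacle to be the rigorous justification of the structural lemma in the second paragraph; the induction must simultaneously propagate the parent-sibling-child decomposition of vertices and the identification of corner versus non-corner triangles at the previous rank. Once that lemma is secured, the remainder is direct bookkeeping.
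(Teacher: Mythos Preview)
Your approach is correct at the level of a proof sketch and takes a genuinely different route from the paper. The paper does not give a self-contained argument: it invokes Poincar\'e's methods, refers the reader to Conway--Burgiel--Goodman-Strauss, \emph{The Symmetry of Things}, and then only illustrates the case $N=7$ by replacing each $N$-gon with a degenerate quadrilateral (type $Y$: a rectangle with $N-4$ extra marked points on its outer side; type $X$: the same with one of those points transferred to the inner side, reflecting the second parent), contracting every triangle to a point so that the $N$-gons now meet three to a vertex, and reading the substitution off the resulting picture. You instead work directly in the $(3,N)$-tiling: you classify the vertices of a rank~$k$ $N$-gon as parent, sibling, or child according to the rank of the across $N$-gon, count the children, and type them by examining the corner triangle at each sibling vertex. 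This is more elementary and would make the paper self-contained where it presently defers to an outside source; the cost is exactly the structural lemma you flag, whose induction must simultaneously establish that no rank~$k$ $N$-gon has more than two parents, that for an $X$-type tile the two parent vertices are adjacent, and that the sibling vertices sit precisely where you place them. Once that lemma is secured, your attribution convention and the resulting words $XY^{N-6}$ and $XY^{N-5}$ follow by straightforward bookkeeping, as you say.
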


\begin{proof}
The methods used to prove this claim have been developed by Poincar\'{e} and we will not dwell on the details here. The reader can find extensive explanation in \textit{The Symmetry of Things} \cite{symm}.

Instead, we will illustrate the methods used to prove the claim in the case of $N=7$ in order to give the geometrical intuition behind the proof. Figure \ref{37tiling} illustrates the local and global behavior of a $(3,7)$-tiling. 

In the local picture, the difference between a type $X$ $7$-gon and a type $Y$ $7$-gon is encoded in the different types of degenerate heptagons we associate to them. We associate to the $Y$-type heptagon a rectangle with 3 additional points  on the upper side, while to the $X$-type heptagon we associate a rectangle with 2 additional points on the upper side and 1 additional point on the lower side, since it has two parents. Now by reducing the triangles in the global picture to points, we notice that the heptagons must meet 3 in each vertex. This results in the crochet pattern shown in Figure 4 (left). This crochet pattern immediately implies the claimed replacement rules.  
\end{proof}
\begin{figure}
\includegraphics[scale=0.6]{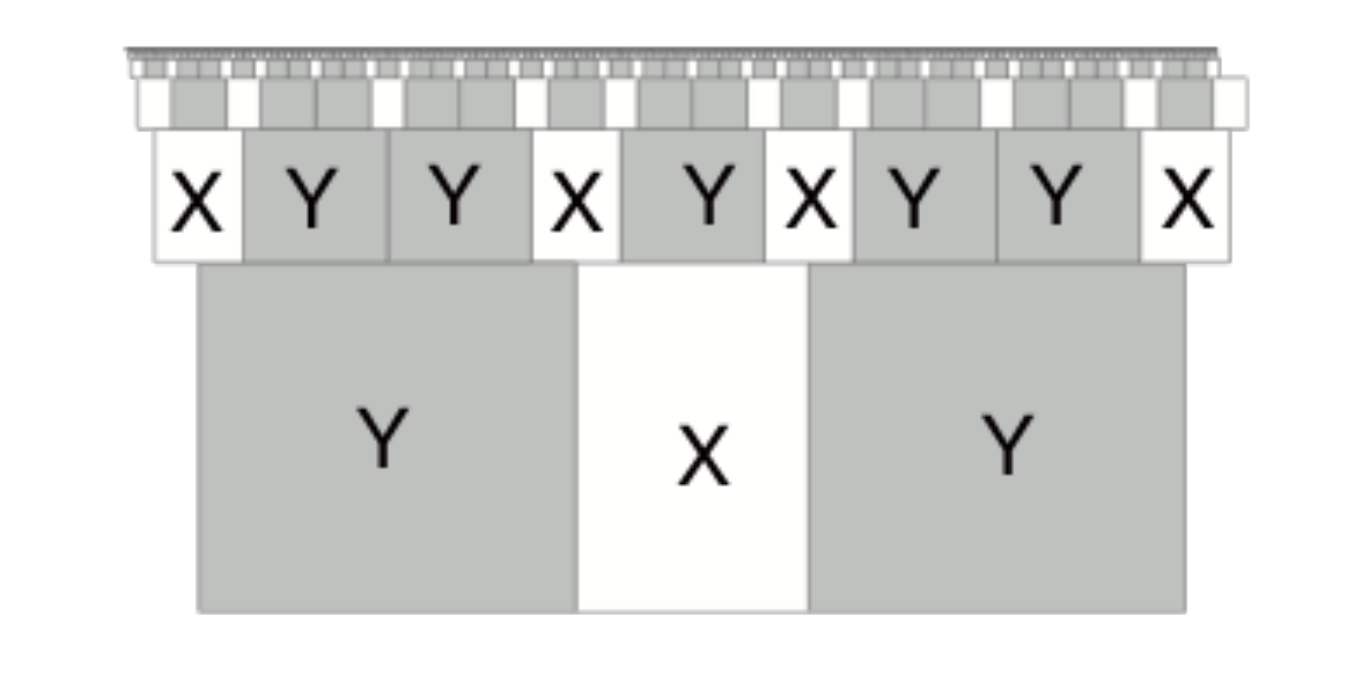}
\includegraphics[scale=0.3]{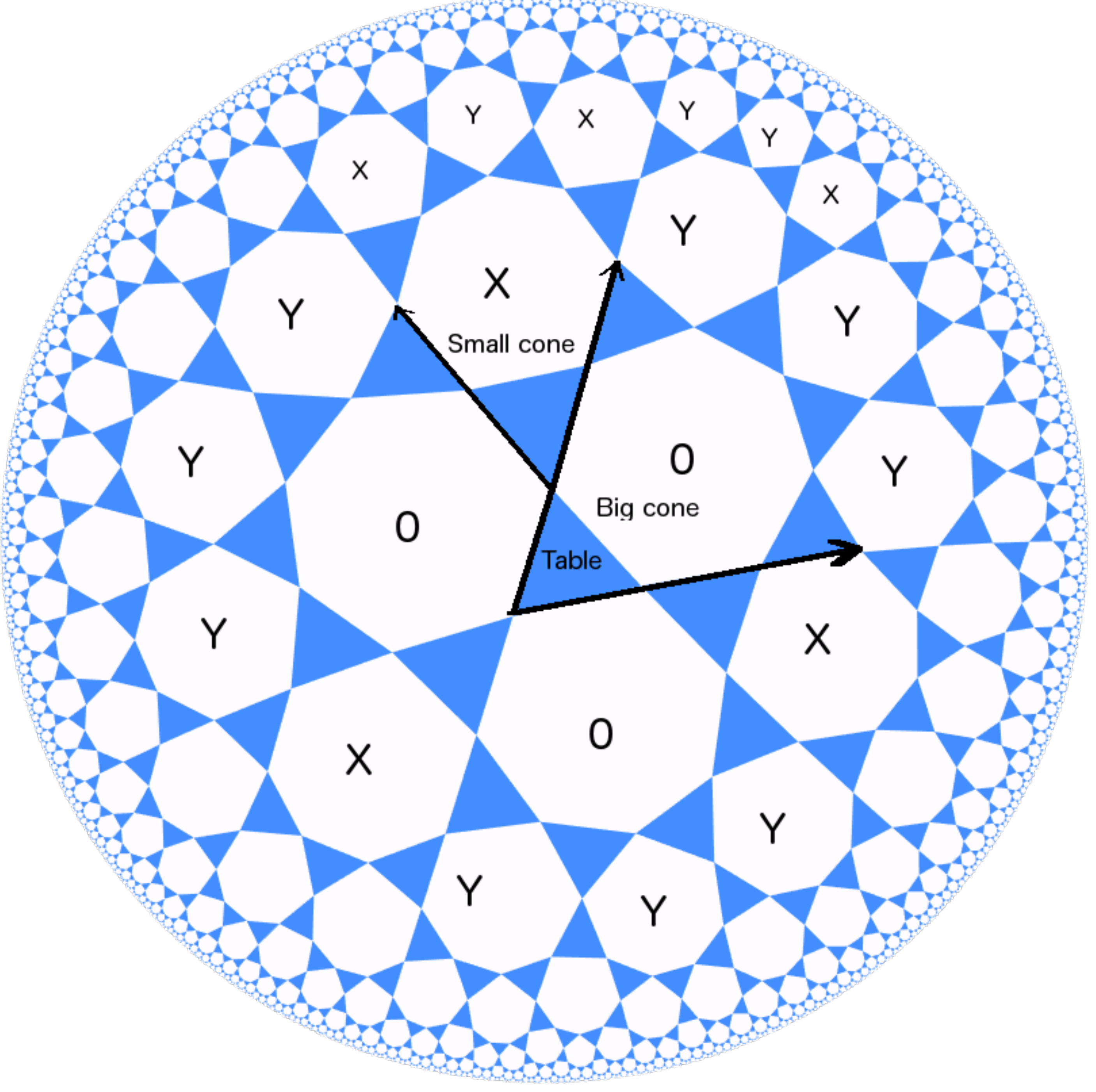}
\caption{The (3,7)-tiling}
\label{37tiling}
\end{figure}

We can now use this crochet pattern to start our counting argument in order to get the exact numbers in Theorem \ref{3n}. 
\begin{claim}
The formulas for $q_k,p_k,j_k,l_k$ hold as stated in Theorem \ref{3n}.
\end{claim}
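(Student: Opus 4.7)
My plan is to use the crochet pattern of Claim 3 as the engine. I would first solve a linear recurrence coming from the replacement rules to obtain $q_k$, then derive $l_k$ by a double-count of $N$-gon/triangle adjacencies, then pin down $p_k$ and $j_k$ using the $3$-fold rotational symmetry of the triangular table, and finally extract the rotation number as an asymptotic ratio.

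Write $x_k$ and $y_k$ for the numbers of type-$X$ and type-$Y$ rank-$k$ $N$-gons (for $k\ge 2$). The replacement rules $X\to XY^{N-6}$, $Y\to XY^{N-5}$ give immediately
\[
x_{k+1}=x_k+y_k,\qquad y_{k+1}=(N-6)x_k+(N-5)y_k,
\]
with transfer matrix of characteristic polynomial $\lambda^2-(N-4)\lambda+1$. Using $\Phi_1+\Phi_2=\sqrt{N-6}$ and $\Phi_1\Phi_2=-1$ one computes $\Phi_1^2+\Phi_2^2=N-4$ and $\Phi_1^2\Phi_2^2=1$, so the roots are exactly $\Phi_1^2$ and $\Phi_2^2$. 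Therefore $q_k=x_k+y_k$ is a linear combination of $\Phi_1^{2k}$ and $\Phi_2^{2k}$; the constants are determined by the initial values $x_2,y_2$, which I would read off directly from the rank-$2$ layer around the table (the three rank-$1$ $N$-gons are ``type-$0$'' seeds whose children obey a slight variant of the rule). The apparently unusual mixture of even and odd powers of $\Phi_i$ in the stated expression is obtained by applying the one-step identity $\Phi_1^{n+1}+\Phi_2^{n+1}=\sqrt{N-6}(\Phi_1^n+\Phi_2^n)+(\Phi_1^{n-1}+\Phi_2^{n-1})$ to rewrite the solution in the claimed form.

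For $l_k$ I would double-count boundary incidences. In the $(3,N)$-tiling every neighbor of an $N$-gon is a triangle, so the $N$ sides of a rank-$k$ $N$-gon split into parent sides (touching rank-$(k-1)$ triangles, two for type $X$ and one for type $Y$) and outer sides touching rank-$k$ triangles. Dually, each rank-$k$ triangle has three $N$-gon neighbors of ranks $k$ or $k+1$, and the count of its rank-$(k+1)$ neighbors equals the parent count of the layer $k+1$. Solving this pair of linear relations expresses $l_k$ as an explicit linear combination of $x_k$ and $y_k$, and substituting the closed form yields the formula for $l_k$.

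For $p_k$ and $j_k$ the key ingredient is that $T$ commutes with the order-$3$ rotation $R$ of the triangular table. Because $R$ permutes the rank-$k$ tiles, the cyclic permutation induced by $T$ must be $R$-equivariant, forcing a decomposition of the shift into the ``symmetry part'' $q_k/3$ (resp.\ $l_k/3$) plus a correction. To compute the correction I would track the image under $T$ of one distinguished rank-$k$ $N$-gon, for example the first type-$X$ tile clockwise of a chosen table vertex, stepping down through the crochet tree: at each level the shift of a child is determined by the shift of its parent and the local $X$/$Y$ configuration. This yields a linear recurrence for the correction with the same characteristic polynomial as $q_k$ but initial data producing the differences $\Phi_1^n-\Phi_2^n$ in place of sums, reproducing the stated closed form for $p_k$; the argument for $j_k$ is parallel, with extra factors $N-3$ and $N-4$ reflecting the triangle vs.\ $N$-gon structure. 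The rotation number $\rho(t)$ then emerges by dominant-term asymptotics: from $\Phi_1\Phi_2=-1$ and $\Phi_1+\Phi_2=\sqrt{N-6}>0$ we have $|\Phi_1|>1>|\Phi_2|$, so the $\Phi_2^n$-terms decay in every sum, and simplifying via $\Phi_1^2=\sqrt{N-6}\Phi_1+1$ (equivalently $1+\Phi_1^2=\sqrt{N-2}\,\Phi_1$) puts the limit in both stated forms. The equality of $\lim p_k/q_k$ with $\lim j_k/l_k$ follows because $j_k,l_k$ differ in their dominant coefficients from $p_k,q_k$ by common multiplicative constants that cancel in the ratio.

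\emph{The main obstacle} is fixing the correction to $p_k$ exactly, not merely modulo $q_k/3$. The $3$-fold symmetry is free, but showing that the geometric trace of $T$ satisfies the correct linear recurrence requires a careful descent through the crochet tree and bookkeeping of how the local $X$/$Y$ configuration propagates the shift. A secondary subtlety is the base case of the recurrence at $k=2$, since rank-$1$ $N$-gons are of the special type $0$ and fall outside the pure $X$/$Y$ rule, so they must be handled by direct inspection of the tiling near the table.
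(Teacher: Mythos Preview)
Your treatment of $q_k$ is essentially the paper's: the same transfer matrix from Claim~3, the same identification of its eigenvalues as $\Phi_1^2,\Phi_2^2$, and the same need to seed the recursion at $k=2$ because rank-$1$ $N$-gons are type~$0$. Your approach to $l_k$ is also morally the same; the paper dispenses with the double-count and simply records that an $X$-type rank-$k$ $N$-gon is followed by $N-4$ rank-$k$ triangles and a $Y$-type by $N-3$, giving $l_k=(N-4)x_k+(N-3)y_k$ in one line.

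Where you diverge is the computation of $p_k$ (and hence $j_k$). Your appeal to $R$-equivariance is not doing the work you suggest: since any two cyclic shifts commute, equivariance places no constraint on $p_k$, and the decomposition $p_k=q_k/3+(\text{correction})$ is just algebra. The paper replaces your ``descent through the crochet tree'' by a concrete geometric device: it partitions the plane outside the table into three \emph{big cones} (opposite the sides) and three \emph{small cones} (opposite the vertices), and observes that one application of $T$ carries a tile over exactly two small cones and one big cone. Writing $s_k$ for the number of rank-$k$ $N$-gons in a small cone, this gives $p_k=q_k/3+s_k$ with no ambiguity. The second key observation is that the small cone is itself self-similar: it coincides with the crochet subtree rooted at a single $X$-type tile, so
\[
\begin{pmatrix}x_k^s\\ y_k^s\end{pmatrix}=\sum_{i=0}^{k-2}\begin{pmatrix}1&1\\ N-6&N-5\end{pmatrix}^{\! i}\begin{pmatrix}1\\ 0\end{pmatrix},
\]
which is what produces the $\Phi_1^n-\Phi_2^n$ terms you anticipated. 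Finally $j_k=(N-4)x_k^s+(N-3)y_k^s+1$ by the same replacement count used for $l_k$, with a $+1$ for the lone triangle at the cone's apex. Your tile-tracking recursion might be made to work, but the cone decomposition removes exactly the obstacle you flagged and gives the closed forms directly.
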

\begin{proof}

Denote the number of $X$-type and $Y$-type $N$-gons of rank $k$ by $x_k$ and $y_k$, respectively, and use Claim 3 to obtain the following system of linear difference equations:
\[\left(\begin{array}{c}
x_k \\
y_k
\end{array}\right)=\left(\begin{array}{cc}
1&1 \\
N-6 & N-5
\end{array}\right)\left(\begin{array}{c}
x_{k-1} \\
y_{k-1}
\end{array}\right).\] 

The initial configuration is $\left(\begin{array}{c}
x_2 \\
y_2
\end{array}\right)=\left(\begin{array}{c}
3 \\
3(N-4)
\end{array}\right)$, because there must be three rank 2 $N$-gons with two parents, and the rest of the vertices of the rank 1 $N$-gons must serve as an anchor for a different $Y$-type rank 2 $N$-gon. Solving this recurrence gives the general term formula:

\[\left(\begin{array}{c}
x_k \\
y_k
\end{array}\right)=3\left(\begin{array}{c}
\frac{1}{\sqrt{N-6}}(\Phi_1^{2k-3}+\Phi_2^{2k-3}) \\
\Phi_1^{2k-2}+\Phi_2^{2k-2}
\end{array}\right)\]
where 
$$\Phi_1=\frac{\sqrt{N-2}+\sqrt{N-6}}{2}\ \ {\rm and}\ \ \Phi_2=\frac{-\sqrt{N-2}+\sqrt{N-6}}{2}.$$
 From here the formula for $q_k=x_k+y_k$ follows immediately. 

To count the triangles of rank $k$, we observe that the triangles of rank $k$ are the next layer after the $N$-gons of rank $k$, and each $X$-type $N$-gon is replaced by $N-4$ triangles and each $Y$-type is replaced by $N-3$ triangles. Hence the formula for $l_k=(N-4) x_k+ (N-3) y_k$ can be computed.

In order to count how many rank $k$ $N$-gons $T$ jumps, i.e., $p_k$, we need to define $s_k$ as the number of rank $k$ $N$-gons in a small cone as can be seen in Figure \ref{37tiling}. A small cone is opposite to one of the triangles vertices and doesn't contain any side of the triangle. In the same way, a big cone (see Figure \ref{37tiling}) is opposite to one of the sides of a triangle and contains the table. The number of rank $k$ $N$-gons in a big cone is just $\frac{q_k}{3}-s_k$ because of the 3-fold symmetry of the tiling.

For the same reasons as above we need to introduce $x^s_k$ and $y^s_k$, the number of $X$-type and $Y$-type rank $k$ $N$-gons in a small cone, respectively. With this, $s_k=x^s_k+y^s_k$. The billiard map $T$ makes any tile jump over 2 small cones and one big cone so in total it will jump $$p_k=2s_k+(\frac{q_k}{3}-s_k)=\frac{q_k}{3}+s_k.$$
By  studying the structure small cone we  observe the crochet pattern once again. One can notice that the cone that starts at the last $X$-type $N$-gon of the rank $k$ ($k\geq 2$) layer looks exactly the same as the initial small cone. That is why $s_k$ is equal to the total number of $N$-gons obtained by starting with an $X$-type $N$-gon and using the replacement rules in Claim 3. We express this as a sum:
\[\left(\begin{array}{c}
x^s_k \\
y^s_k
\end{array}\right)=\sum_{i=0}^{k-2}\left(\begin{array}{cc}
1&1 \\
N-6 & N-5
\end{array}\right)^i\left(\begin{array}{c}
1 \\
0
\end{array}\right),\] 
which, after some computation becomes:
\[\left(\begin{array}{c}
x^s_k \\
y^s_k
\end{array}\right)=\left(\begin{array}{c}
1+\frac{\Phi_1^{2k-4}-\Phi_2^{2k-4}}{\sqrt{(N-6)(N-2)}} \\
-1+\frac{\Phi_1^{2k-3}-\Phi_2^{2k-3}}{\sqrt{N-2}}
\end{array}\right).\] 

The formula for $p_k=\frac{q_k}{3}+x^s_k+y^s_k$ follows immediately. $j_k$ is computed in the same manner as $l_k$ was computed. As we have already said, every $X$ type $N$-gon is replaced by $N-4$ triangles and every $Y$ type $N$-gon is replaced by $N-3$ triangles on the next level and this procedure leaves uncounted only one rank $k$ triangle in the small cone, so $j_k=(N-4)x^s_k+(N-3)y^s_k+1$.
\end{proof}
\begin{claim}
The rotation number $\rho(t)$ equals
$$\frac{1}{3}+\frac{1}{3(1+\Phi_1^2)}=\frac{1}{3}+\frac{1}{3\sqrt{N-2}\Phi_1}.$$
\end{claim}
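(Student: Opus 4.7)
The plan is to evaluate $\rho(t)=\lim_{k\to\infty} p_k/q_k$ directly from the closed forms established in Claim 4, exploiting the asymmetry between the two roots $\Phi_1,\Phi_2$ of the defining quadratic. By Vieta's formulas on $\Phi^2-\sqrt{N-6}\,\Phi-1=0$ I get $\Phi_1\Phi_2=-1$, and a direct inspection shows $\Phi_1>1$ for $N\geq 7$; consequently $|\Phi_2|=1/\Phi_1<1$. Every term of the form $\Phi_2^{2k-j}$ (with $j$ fixed) therefore decays to $0$ as $k\to\infty$, while the corresponding $\Phi_1^{2k-j}$ grows without bound, so all $\Phi_2$-contributions become negligible in the limit and it suffices to track $\Phi_1$-terms.

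Writing $p_k/q_k=\frac13+(p_k-q_k/3)/q_k$, I would substitute the closed forms for $p_k-q_k/3$ and $q_k$, keep only the $\Phi_1$-pieces, and divide both numerator and denominator by the dominant power $\Phi_1^{2k-2}$. The numerator reduces to
$$\frac{1}{\Phi_1^2\sqrt{(N-6)(N-2)}}+\frac{1}{\Phi_1\sqrt{N-2}}=\frac{1}{\Phi_1\sqrt{N-2}}\Bigl(1+\tfrac{1}{\Phi_1\sqrt{N-6}}\Bigr),$$
while the denominator is proportional (up to the overall factor of $3$ inherited from the initial data $(x_2,y_2)=(3,3(N-4))$) to $1+\tfrac{1}{\Phi_1\sqrt{N-6}}$. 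The parenthesized factor cancels, leaving
$$\lim_{k\to\infty}\Bigl(\frac{p_k}{q_k}-\frac13\Bigr)=\frac{1}{3\,\Phi_1\sqrt{N-2}}.$$
The same manipulation applied to $(j_k-l_k/3)/l_k$ produces the identical limit: the weights $N-4$ on the $X$-contributions and $N-3$ on the $Y$-contributions appear symmetrically in numerator and denominator and cancel verbatim, so $\lim j_k/l_k=\lim p_k/q_k$.

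Finally, to reconcile the two forms stated in the claim, I would verify the algebraic identity $1+\Phi_1^2=\Phi_1\sqrt{N-2}$. Starting from the defining equation $\Phi_1^2=\sqrt{N-6}\,\Phi_1+1$ gives $1+\Phi_1^2=2+\sqrt{N-6}\,\Phi_1$; then rationalizing yields $\sqrt{N-2}-\sqrt{N-6}=4/(\sqrt{N-2}+\sqrt{N-6})=2/\Phi_1$, so $\Phi_1\sqrt{N-2}=\Phi_1\sqrt{N-6}+2$, matching the previous line. This confirms $\frac{1}{3\Phi_1\sqrt{N-2}}=\frac{1}{3(1+\Phi_1^2)}$ and completes the claim.

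The main obstacle is purely bookkeeping: keeping track of which power of $\Phi_1$ is dominant in each sum, handling the overall factor of $3$ coming from the base case, and making the cancellation of the common factor $1+\tfrac{1}{\Phi_1\sqrt{N-6}}$ transparent. No new geometric input is needed beyond the closed forms already derived in the preceding claims.
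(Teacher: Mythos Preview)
Your argument is correct and follows exactly the route the paper takes: the paper's own proof of Claim 5 is a one-sentence remark that $p_k/q_k$ approximates $\rho(t)$ and that the stated formula drops out upon taking the limit, with no details supplied. You have simply filled in those details---isolating the dominant $\Phi_1$-terms, exhibiting the cancellation of the common factor $1+\tfrac{1}{\Phi_1\sqrt{N-6}}$, and verifying the algebraic identity $1+\Phi_1^2=\Phi_1\sqrt{N-2}$---so there is nothing methodologically different to compare.
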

\begin{proof}
The $k$-th layer of $N$-gons gives a discrete approximation of the circle map at infinity and so $\frac{p_k}{q_k}$ is an approximation of $\rho(t)$ as $k$ goes to $\infty$. By taking the limit we obtained the desired formula for the rotation number $\rho(t)$.
\end{proof}

This last claim completes the proof of all the statements in Theorem 2. 

\begin{remark}
{\rm 
(i) One might expect the formulas in Theorem 2 to also work for $N=6$, i.e., a $(3,6)$-tiling of the Euclidean plain. That is not the case even though the crochet pattern works exactly the same also in the $(3,6)$-tiling. The difference that appears when computing the formulas in the $(3,6)$-tiling is that the matrix of the difference system is not diagonalizable anymore and so its powers look completely different. 

(ii) Note that the determinant of all the matrices given by the crochet pattern is 1. We believe this is true because the crochet pattern replacement can also be reversed, i.e., starting with rank $k$ layer, we can construct the rank $k-1$ layer.

(iii) According to Theorem 2, one can express the eigenvalues $\Phi_1$ and $\Phi_2=1/\Phi_1$ via the rotation number $\rho(t)$. Therefore this rotation number determines the numbers $q_k,l_k,p_k,j_k$, and hence the whole dynamics of the map $T$. 
}
\end{remark}

\subsubsection{General $(M,N)$-tilings}
\indent \indent Next we consider the case of a general $(M,N)$-tiling. The theorem and subsequent proof are analogous to those in the $(3,N)$ case in the previous subsection, but we must consider the cases separately due to a difference in the counting method. In the previous section, $N$-gons were classified into types $X$ and $Y$, having two parents and one parent, respectively. However, due to the difference in geometry of triangles versus generic $M$-gons, the tilings in the $M\ge4$ case never produce $N$-gons with two parents. In this case, $N$-gons either have one parent or no parent, which we denote as types $Y$ and $Z$. This alternate counting method will be explained in detail in the proof, but first we state the theorem:

\begin{theorem}
For an $(M,N)$-tiling with
$$M,N\geq 4,\ \frac{1}{M} + \frac{1}{N} < \frac{1}{2},$$
the outer billiard map $T$ preserves the rank of a tile and every orbit of $T$ is periodic. 
The set of rank $k$ $N$-gons consists of 
$$
q_k = \frac{M}{\sqrt{b^2-4}}\left( 
(b+1)(\alpha_1^{2k-2} - \alpha_2^{2k-2}) 
 - (\alpha_1^{2k-4}-\alpha_2^{2k-4})
\right)
$$
elements, and the set of rank $k$ $M$-gons consists of
$$
l_k = \frac{M(N-2)}{\sqrt{b^2-4}}
\left(
b(\alpha_1^{2k-2} - \alpha_2^{2k-2}) 
- (\alpha_1^{2k-4}-\alpha_2^{2k-4} )
\right)
$$
elements, where
$b = (M-2)(N-2)-2$ and
$$\alpha_{1,2} = \frac{\sqrt{b-2} \pm \sqrt{b+2}}{2}$$
are the two roots of the equation $\alpha^2-\sqrt{b-2}\alpha-1=0$. The action of $T$ on the set of rank $k$ $N$-gons is a cyclic permutation $i\mapsto i+p_k$ where 
\[p_k=\frac{q_k}{M} + \frac{M-2}{(b-2)\sqrt{b+2}}\left((b-1)(\alpha_1^{2k-3}-\alpha_2^{2k-3})-(\alpha_1^{2k-5}-\alpha_2^{2k-5})\right),
\]
and the action of $T$ on the set of rank $k$ $M$-gons is also a cyclic permutation $i\mapsto i+j_k$ where 
\[j_k= \frac{l_k}{M} + \frac{1}{(b-2)\sqrt{b+2}}\left(
 (b^2-2)(\alpha_1^{2k-3}-\alpha_2^{2k-3})-b(\alpha_1^{2k-5} - \alpha_2^{2k-5}) \right).
\]
The rotation number of the outer billiard map at infinity is given by the formula 
\[\rho(t)=\frac{1}{M} + \frac{M-2}{M\sqrt{b-2}\alpha_1}
\left( \frac{(b-1)\alpha_1^2-1}{(b+1)\alpha_1^2-1} \right). \]
\end{theorem}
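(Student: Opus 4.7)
The plan is to imitate the proof of Theorem 2 section by section. First, the statements that $T$ preserves rank, that every orbit of $T$ is periodic, and that $T$ permutes each rank-$k$ layer cyclically carry over verbatim from Lemma 1, Claim 1, and Claim 2: those arguments only use the piecewise-isometry structure of $T$ and the fact that each rank-$k$ tile is adjacent to a rank-$(k-1)$ tile, neither of which depends on $M$ or $N$. The only cosmetic change is that the final ``$T^{Nm}$ is the identity on an $N$-gon, $T^{3m}$ on a triangle'' step becomes ``$T^{Nm}$ on an $N$-gon, $T^{Mm}$ on an $M$-gon.''

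The combinatorial heart is the new crochet pattern. Because $M\ge 4$, no rank-$k$ $N$-gon touches two $N$-gons in the rank-$(k-1)$ layer, so $N$-gons split only into types $Y$ (one parent) and $Z$ (no parent), as the paragraph preceding the theorem observes. Following the Poincar\'e-style degenerate-polygon reduction used in Claim 3, I would derive replacement rules of the form $Y\mapsto Y^{a_1}Z^{b_1}$ and $Z\mapsto Y^{a_2}Z^{b_2}$ for explicit $a_i,b_i$ depending on $M$ and $N$. The resulting $2\times 2$ transition matrix $A$ must have determinant $1$ (this is the content of Remark 1(ii)) and trace equal to $b=(M-2)(N-2)-2$, so that its characteristic polynomial is $\lambda^2-b\lambda+1=0$, i.e.\ $\lambda=\alpha_{1,2}^2$ where $\alpha_{1,2}$ are the roots of the quadratic in the theorem; verifying this trace and determinant is the main consistency check on the combinatorial model.

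With the recurrence $(y_k,z_k)^T=A(y_{k-1},z_{k-1})^T$ and an initial vector $(y_2,z_2)^T$ computed directly from the geometry near the $M$-gon table (treating the rank-$1$ layer as special, just as the type-$0$ $N$-gons were in the $(3,N)$ case), diagonalisation yields closed forms for $y_k$ and $z_k$, and hence for $q_k=y_k+z_k$. A weighted sum $l_k=c_Y y_k+c_Z z_k$, with $c_Y,c_Z$ equal to the number of rank-$k$ $M$-gons anchored by each type of $N$-gon, produces the $M$-gon count. For $p_k$ and $j_k$, I would introduce small-cone analogues $y_k^s, z_k^s$, observe that the small cone restarts self-similarly at the appropriate type-$Y$ $N$-gon, and compute $(y_k^s,z_k^s)^T=\sum_{i=0}^{k-2}A^i v$ via the same diagonalisation. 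Substituting into an identity of the shape $p_k=q_k/M+(M-2)s_k$ (and its $j_k$ analogue, with a $+1$ correction for the uncounted central rank-$k$ $M$-gon in the small cone) yields the stated formulas, and $\rho(t)=\lim p_k/q_k$ falls out by keeping only the dominant eigenvalue $\alpha_1^2$ and simplifying via $\alpha_1\alpha_2=-1$.

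The main obstacle is the first step: deriving the correct replacement rules when $M\ge 4$. The local geometry around a type-$Z$ $N$-gon, which touches the previous layer only through shared $M$-gons and vertices rather than through an edge shared with another $N$-gon, is qualitatively different from the $(3,N)$ case, and bookkeeping the initial vector $(y_2,z_2)$ together with the $+1$ corrections at the boundary of the rank-$1$ layer requires care. Once $A$ and the initial data are pinned down, everything else is the same diagonalisation-and-geometric-series algebra already carried out in Theorem 2.
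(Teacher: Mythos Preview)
Your outline matches the paper's proof almost step for step: the same carryover of Lemma~1/Claims~1--2, the same $Y/Z$ crochet pattern, the same $2\times 2$ transfer matrix with trace $b$ and determinant $1$, the small-cone geometric-series trick, and the $\rho(t)=\lim p_k/q_k$ limit. A few of your guessed constants differ from what the paper actually finds, and it is worth flagging them so you don't get stuck:
\begin{itemize}
\item The paper's replacement rules are $Y\to (YZ^{M-3})^{N-4}YZ^{M-4}$ and $Z\to (YZ^{M-3})^{N-3}YZ^{M-4}$; because the rank-$1$ $N$-gons all touch the $M$-gon table (not another $N$-gon), they are \emph{all} of type $Z$, so the natural initial vector is $(y_1,z_1)=(0,M)$ rather than a rank-$2$ start.
\item The jump identity is $p_k=q_k/M+s_k$, not $q_k/M+(M-2)s_k$: even for general $M$ the map $T$ still carries a tile across exactly two small cones and one big cone, so the coefficient on $s_k$ stays $1$.
\item The small cone is seeded by the vector $(2,M-4)$, not by a single $Y$; the self-similarity you anticipate is there, but the seed is richer than in the $(3,N)$ case.
\end{itemize}
None of these is a structural gap---each is exactly the kind of detail you say ``requires care''---and once they are corrected your argument is the paper's argument.
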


\begin{remark}
{\rm
if $N=M$, the statement of Theorem 3 reduces to that of Theorem 1.
}
\end{remark}

The proof of Theorem 3 also consists of several steps.

\begin{claim} Every orbit of $T$ is periodic.
\end{claim}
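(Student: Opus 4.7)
The plan is to repeat the proof of Claim 1 essentially verbatim, since the argument used there depends only on two features of the tiling, neither of which is specific to the $(3,N)$ case: first, that rank is preserved under $T$; second, that an orientation-preserving isometry of $\mathbb{H}^2$ fixing a regular $k$-gon setwise is necessarily a rotation by a multiple of $2\pi/k$ about its center.

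The key step is the rank-preservation lemma, proven by induction on $k$. Base case ($k=1$): each rank $1$ tile shares a side with the $M$-gon table; under $T$ it is reflected about one of the two endpoints of that side, which is a rotation by $\pi$ about a vertex of the tiling and thus an automorphism of the tiling, so the image is again a rank $1$ tile. Inductive step: every rank $k$ tile $Q$ is adjacent to some rank $k{-}1$ tile $Q'$; under $T$, the pair $Q\cup Q'$ maps as a whole, either because both tangency points coincide with their common vertex, or (in the degenerate case where the shared vertex lies on a clockwise continuation of a side of the table) by the same $A'O_2 = BO_2$ calculation carried out in the proof of Lemma 2 above, which never used that the table was a triangle. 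Hence $T(Q)$ is adjacent to $T(Q')$; by the inductive hypothesis $T(Q')$ has rank $k{-}1$, forcing $T(Q)$ to have rank $k{-}1$, $k$, or $k{+}1$. Applying the same argument to $T^{-1}$ pins the rank down to exactly $k$.

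Once rank-preservation is established, the conclusion follows by pigeonhole exactly as in the $(3,N)$ case: each layer contains only finitely many tiles (this is implicit in the counting of Theorem 3, but is also obvious from the fact that the tiling is locally finite), so for each tile $Q$ of rank $k$ there exists $m \in \mathbb{N}$ with $T^{m}(Q) = Q$ as a set. Since $T^m$ restricted to $Q$ is a single orientation-preserving hyperbolic isometry stabilizing the regular polygon $Q$, it must be a rotation about the center of $Q$ by $2\pi j/M$ if $Q$ is an $M$-gon or by $2\pi j/N$ if $Q$ is an $N$-gon. Consequently $T^{Mm}|_Q = \mathrm{id}$ or $T^{Nm}|_Q = \mathrm{id}$, and every orbit through $Q$ is periodic.

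The only substantive worry is the inductive step of rank-preservation, specifically checking that every rank $k$ tile really is adjacent to a rank $k{-}1$ tile in such a way that the pair maps jointly. Apart from the degenerate case settled by the Lemma 2 computation reused above, this is immediate from the structure of a two-piece regular tiling (any minimal edge-path from the table to a rank $k$ tile exhibits the required chain of neighbors), so no additional obstacle arises in passing from $M=3$ to $M\geq 4$.
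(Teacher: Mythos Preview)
Your argument is correct and follows the same approach as the paper: rank preservation (via the inductive Lemma~1 argument) combined with finiteness of each rank layer forces each tile to return to itself, and then the rotation-of-a-regular-polygon observation gives pointwise periodicity. The paper's own proof of this claim is in fact briefer than yours---it simply declares the argument ``analogous to the proof in the previous section'' and restates the $T^{\circ mM}$/$T^{\circ mN}$ conclusion---so you have supplied more detail than strictly required. One small quibble: the degenerate-case calculation you invoke from Lemma~2 was stated there for two \emph{same-rank} consecutive tiles, not for a rank-$k$/rank-$(k{-}1)$ pair, so citing it for the inductive step of rank preservation is a slight conflation (though the underlying geometry is indeed the same).
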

\begin{proof}
The proof of this claim is analogous to the proof in the previous section. Because the rank of each tile is preserved under the billiard map, and because there are finitely many tiles of a given rank, every tile must map back to itself after some finite number of iterations $m$. When the tile maps back to itself, it has rotated by $\frac{2\pi j}{M}$ if it is an $M$-gon or by $\frac{2\phi j}{N}$ if it is an $N$-gon. Then $T^{\circ mM}$ is the identity if the tile is an $M$-gon and $T^{\circ mN}$ is the identity if the tile is an $N$-gon. 
\end{proof}
\begin{claim} For every $k\geq 1$, $T$ permutes the rank $k$ tiles cyclically. \end{claim}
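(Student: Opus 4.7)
The plan is to reduce the claim to the adjacency-preservation lemma used in the $(3,N)$ case, since the cyclic structure of the rank $k$ layer is entirely determined by which tiles are neighbors. Concretely, I will show that if $S_1$ and $S_2$ are two consecutive rank $k$ tiles (sharing a boundary point on the cyclic layer around the table), then $T(S_1)$ and $T(S_2)$ are also consecutive. Combined with Claim 6 (rank preservation) and the finiteness of each rank $k$ layer, this forces $T$ to act on the rank $k$ tiles as a cyclic rotation: starting from any rank $k$ tile and iterating $T$, one walks around the layer one step at a time, eventually returning to the start after exhausting the layer.

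To prove the adjacency-preservation statement, I would follow the same case split used in Lemma 2. In the \emph{generic} case, the common boundary point $p$ of $S_1$ and $S_2$ is not on a clockwise continuation of any side of the table, so the map $T$ acts on a neighborhood of $p$ by a single rotation about a fixed vertex of the table. Hence $T(p)$ is well-defined as the image of a single point, and $T(S_1)$ and $T(S_2)$ both contain $T(p)$, so they are consecutive. In the \emph{boundary} case, $S_1$ and $S_2$ are separated by a clockwise continuation of some side of the table, so they are reflected through two different vertices $O_1$ and $O_2$ of the table, which are the endpoints of that side. Letting $A$ be the corner of $S_1$ lying on the continuation and $B$ the corner of $S_2$ on the continuation, I would verify the chain of equalities
\[
A'O_2 = A'O_1 - O_1O_2 = BO_1 + AB - O_1O_2 = BO_1 + O_1O_2 = BO_2 = B'O_2,
\]
exactly as in the $(3,N)$ case. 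Since $A'$ and $B'$ lie on the same ray from $O_2$ at equal hyperbolic distance, $A' = B'$, so $T(S_1)$ and $T(S_2)$ again share a boundary point.

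The one thing to double-check is that nothing in the picture depends on the table being a triangle rather than an $M$-gon: the geometric argument uses only that two adjacent vertices of a convex polygonal table $P$ are at a fixed hyperbolic distance $|O_1O_2|$ apart, that reflection about a vertex is a hyperbolic isometry, and that rank is preserved. None of these ingredients uses $M=3$, so the argument transfers verbatim. I expect the only subtlety to be confirming that in the $(M,N)$-tiling the two tiles flanking a clockwise continuation of a side of $P$ share their common corner on that continuation line in exactly the configuration of Figure 3; but this is automatic because the continuation of a side passes through the shared vertex of the two adjacent table neighbors, and higher-rank adjacencies are inherited under the tile-by-tile action of $T$ established in Lemma \ref{rankpreserve}.
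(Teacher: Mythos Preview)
Your proposal is correct and follows exactly the approach the paper intends: the paper's proof of this claim is simply ``Proof is similar to that for Claim 2,'' and you have written out that similarity in full, reducing to the adjacency-preservation lemma with the same generic/boundary case split and the same chain of distance equalities. Your observation that nothing in the argument depends on $M=3$ is precisely the content of the paper's one-line deferral.
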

\begin{proof} Proof is similar to that for Claim 2. \end{proof}

\begin{figure}[h]
\centering
\includegraphics[width=.60\textwidth]{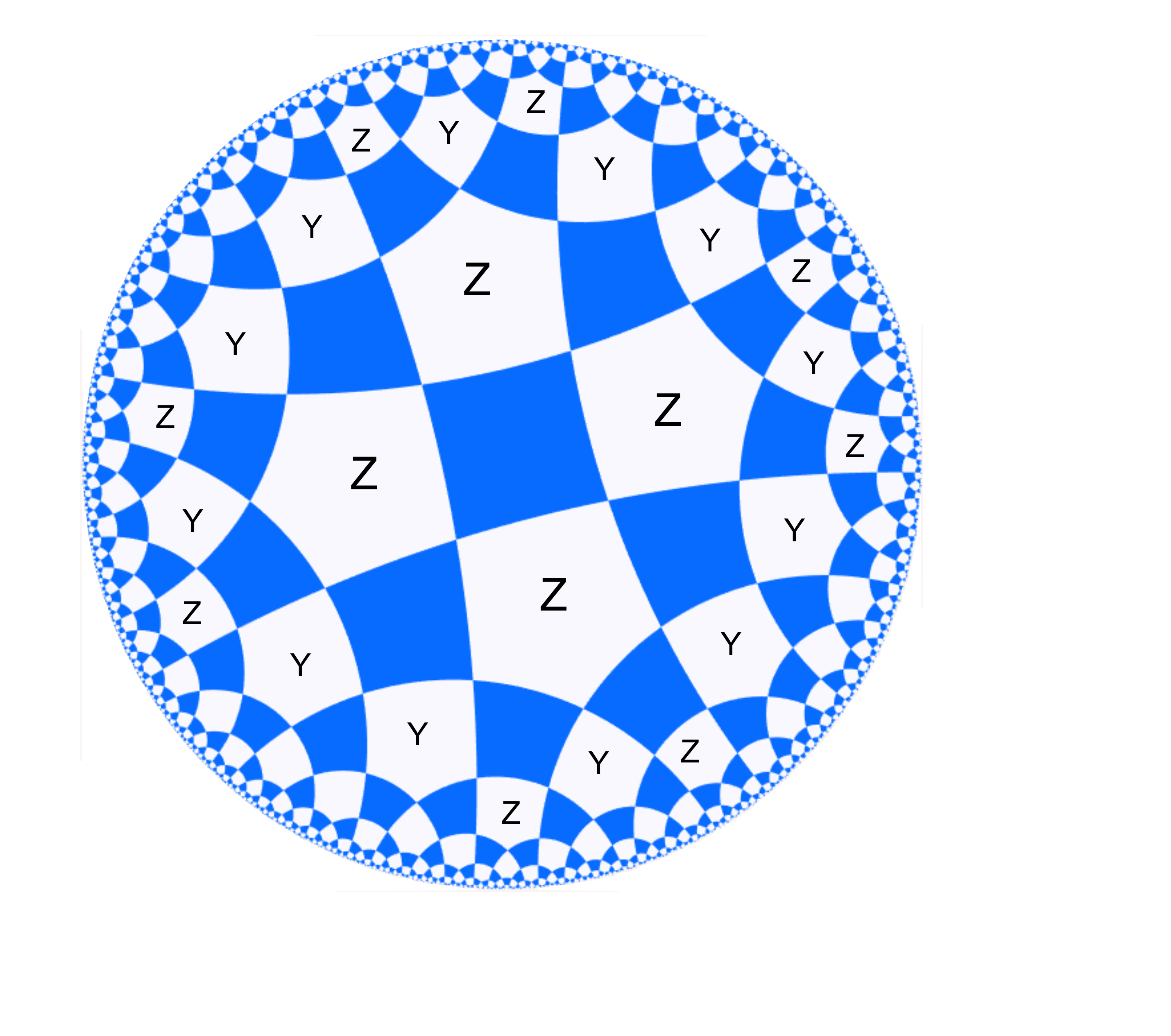}
\caption{(4,5)-tiling, with rank 1 and rank 2 pentagons labeled either as type Y (one parent) or as type Z (no parents).}
\label{fig:4-5labeled}
\end{figure}

Recall that we defined type $Y$ tiles to have one parent and type $Z$ tiles to have zero parents (see Figure \ref{fig:4-5labeled}). We now give a crochet pattern for general $(M,N)$-tilings, $M\geq 4$.

\begin{figure}[b]
\centering
\includegraphics[width=.5\textwidth]{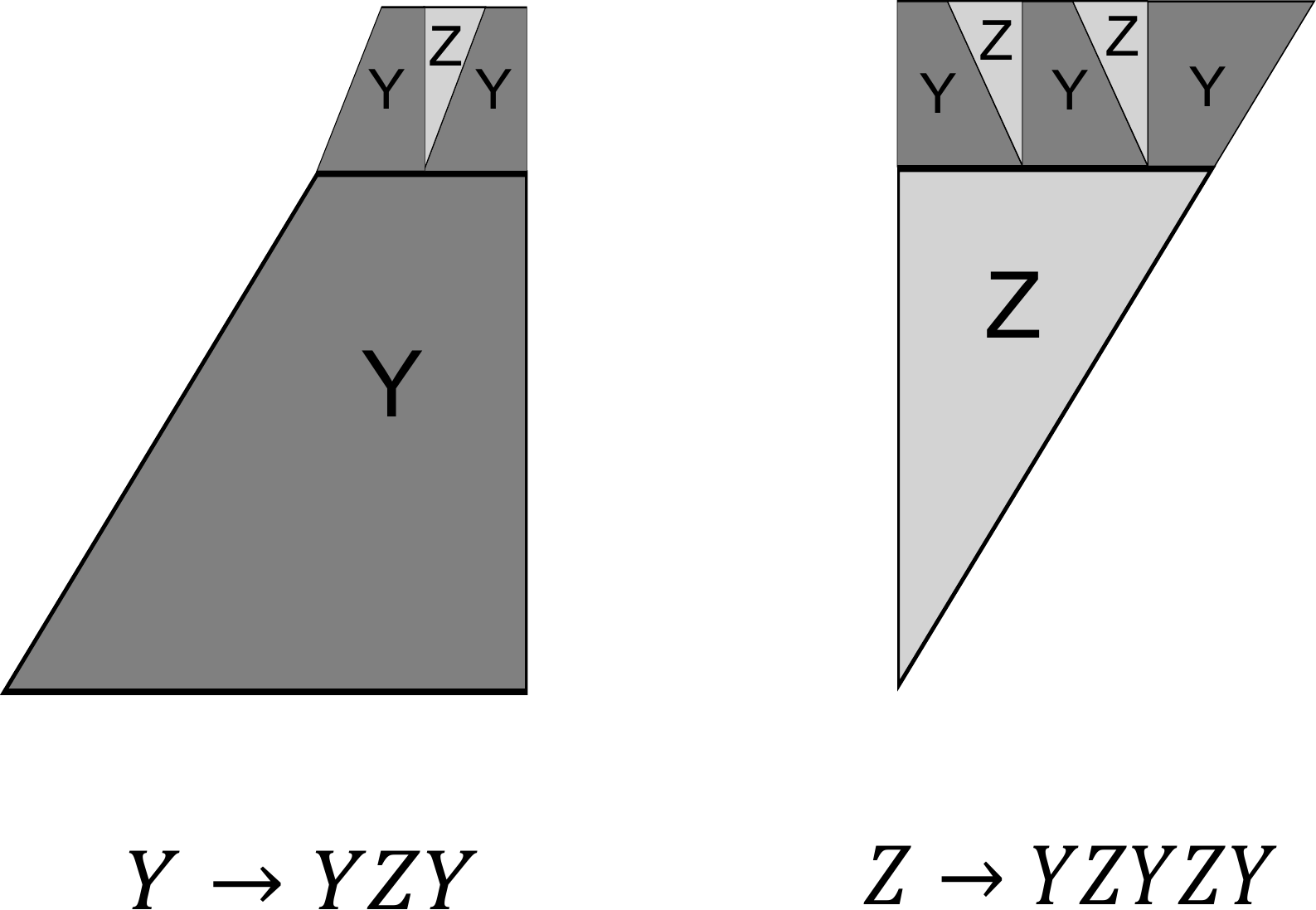}
\caption{Tiling of pentagons meeting in fours. Can be extended to a (4,5)-tiling.}
\label{fig:4-5crochet}
\end{figure}

\begin{claim} The following replacement rules hold for $(M,N)$-tilings.
\begin{equation} \label{eq:crochety} Y \to (YZ^{M-3})^{N-4}YZ^{M-4} \end{equation}
\begin{equation} \label{eq:crochetz} Z \to (YZ^{M-3})^{N-3}YZ^{M-4} \end{equation}
\end{claim}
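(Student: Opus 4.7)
My plan is to prove the replacement rules by a direct local analysis of the neighborhood of a rank-$k$ $N$-gon $P$. First I would introduce vocabulary: an edge of $P$ is \emph{inner} if the $M$-gon across it has rank $k-1$, else \emph{outer}; a vertex of $P$ is a \emph{parent}, \emph{sibling}, or \emph{child} vertex according as the unique vertex-neighbor $N$-gon across it has rank $k-1$, $k$, or $k+1$ (these exhaust the possibilities, since the dual graph is bipartite and any two distinct vertex-adjacent $N$-gons sit at dual distance exactly~$2$, so their ranks differ by $0$ or $1$ in reduced indexing). The first thing to establish is a \textbf{rigidity lemma}: the inner edges of $P$ form a single connected arc of $\partial P$ whose length depends only on the type. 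For type $Y$ the arc has length $2$ with the single parent vertex in its interior and sibling vertices at its two endpoints; for type $Z$ the arc has length $1$, flanked by two sibling vertices. This uses the $(M,N,M,N)$ vertex configuration, the adjacent-rank constraint, and crucially that $M\ge 4$ forbids the type-$X$ configuration (two parent vertices) that arose in the $(3,N)$ case.

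Next I would walk along $P$'s outer arc from one sibling vertex to the other and enumerate all rank-$(k+1)$ neighbors. Each outer edge $e$ carries a rank-$k$ $M$-gon $Q$, which I call \emph{boundary} if $e$ meets a sibling vertex of $P$ (so $Q$ is edge-shared with the rank-$k$ sibling $N$-gon across that vertex) and \emph{middle} otherwise. A vertex count yields: a middle $Q$ has $P$ as its unique rank-$k$ neighbor and so $M-1$ rank-$(k+1)$ neighbors, of which the two at the endpoints of $e$ are vertex-adjacent to $P$ and the remaining $M-3$ are not; a boundary $Q$ has two rank-$k$ neighbors ($P$ and its sibling), hence $M-2$ rank-$(k+1)$ neighbors, one vertex-adjacent to $P$, one to the sibling, and $M-4$ interior. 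Applying the rigidity lemma to each new child shows that the vertex-adjacent ones are type $Y$ (unique rank-$k$ parent being $P$ or the sibling, no type-$X$ available) and the interior ones are type $Z$ (no rank-$k$ parent at all).

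Finally I assemble the crochet word. Fix the cyclic convention that the $M-4$ interior $Z$-children of each boundary $M$-gon are attributed to the rank-$k$ $N$-gon whose outer arc terminates (clockwise) at that $M$-gon; this gives a one-to-one partition of rank-$(k+1)$ $N$-gons among rank-$k$ parents. Reading $P$'s block from just past the entering boundary $M$-gon to the exiting one, one finds a $Y$ at each child vertex, a $Z^{M-3}$ from each middle $M$-gon, and a terminal $Z^{M-4}$ from the exiting boundary $M$-gon. Type $Y$ has $N-3$ child vertices and $N-4$ middle $M$-gons, giving $(YZ^{M-3})^{N-4}YZ^{M-4}$; type $Z$ has $N-2$ child vertices and $N-3$ middle $M$-gons, giving $(YZ^{M-3})^{N-3}YZ^{M-4}$, which is exactly \eqref{eq:crochety}--\eqref{eq:crochetz}.

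The hard part is the rigidity lemma of the first step; once that is secured, the rest is careful bookkeeping along $\partial P$. I expect to prove it by Poincar\'e's layer-by-layer method for hyperbolic tilings, as referenced for the $(3,N)$ case in \cite{symm}, exploiting the angle deficit $\frac{1}{M}+\frac{1}{N}<\frac{1}{2}$ to preclude the closed loops of inner $M$-gons that would be required to produce a second parent vertex.
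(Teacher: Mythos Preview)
Your argument is correct, and it takes a genuinely different route from the paper's. The paper first collapses every $M$-gon to a point, obtaining the regular one-piece tiling of $N$-gons meeting $M$ at each vertex; it then represents $Y$- and $Z$-type $N$-gons as degenerate quadrilaterals and triangles with $N$ marked vertices, reads off the crochet rules in that simpler tiling, and finally re-expands the vertices to $M$-gons to recover the $(M,N)$ picture. You instead stay in the $(M,N)$-tiling throughout and analyze the full boundary of a rank-$k$ $N$-gon, explicitly tracking the intervening rank-$k$ $M$-gons (your ``middle'' and ``boundary'' $M$-gons) and the rank-$(k+1)$ $N$-gons hanging off them. Your rigidity lemma plays the role of the paper's degenerate-polygon classification. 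The paper's reduction is slicker and lets one borrow the Poincar\'e layer machinery for one-piece tilings wholesale; your approach is more self-contained and has the advantage that the $M$-gon counts needed later for $l_k$ and $j_k$ are already visible in the enumeration, with no translation step required. One point to make fully explicit when you write it up: your claim that a boundary $M$-gon has \emph{exactly} two rank-$k$ $N$-gon neighbors (and a middle one exactly one) is itself a rigidity statement about $M$-gons, parallel to your lemma for $N$-gons, and should be folded into the same layer-by-layer argument.
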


\begin{figure}[h]
\centering
\includegraphics[width=.80\textwidth]{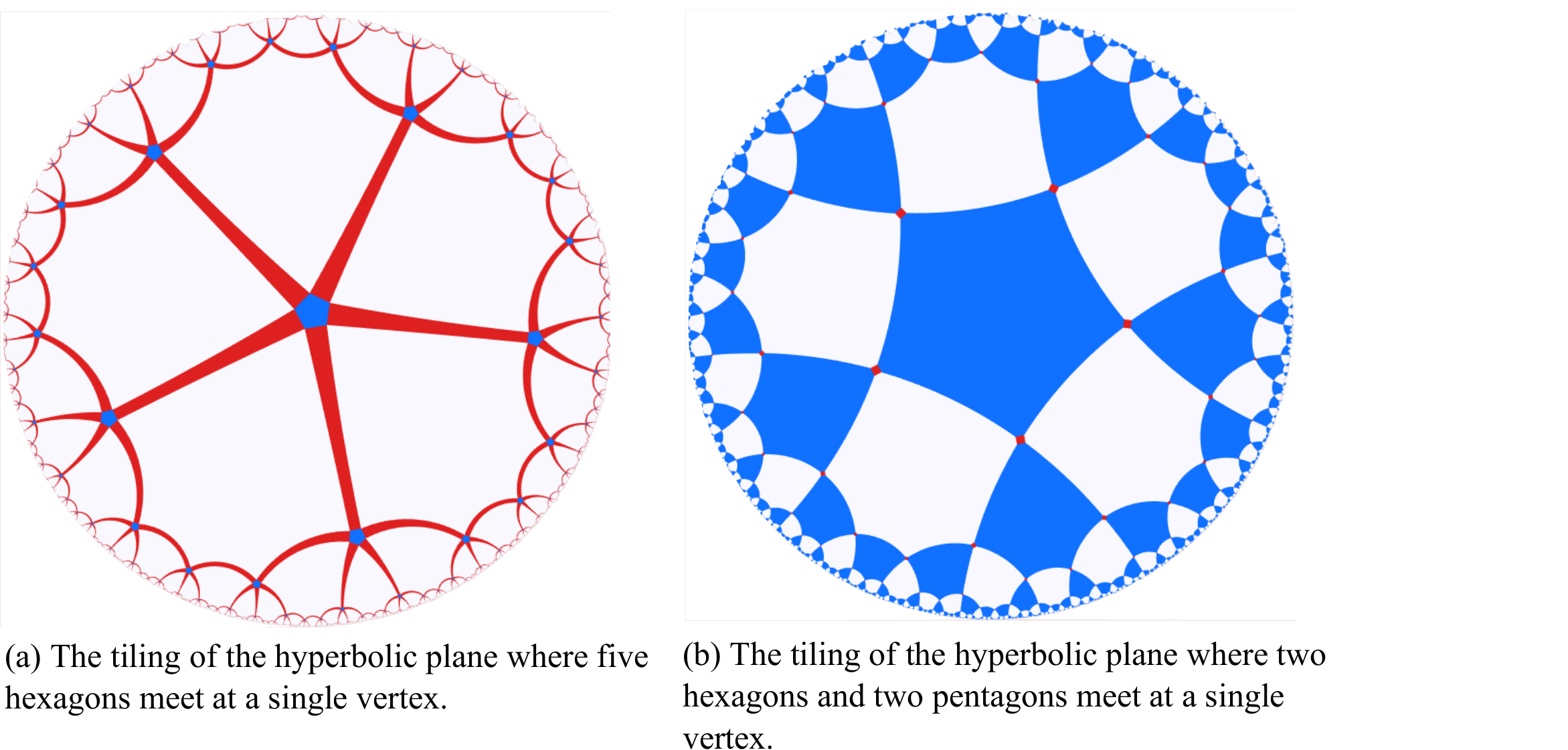}
\caption{In (a), the plane is tiled using hexagons meeting in fives. As illustrated by (b), by replacing the vertices in the previous picture with pentagons, we achieve a (5,6)-tiling.}
\label{fig:kaleido}
\end{figure}

\begin{proof}
In a similar manner to the $(3,N)$ case, we represent type $Y$ and $Z$ tiles as degenerate polygons, with additional vertices. See Figure \ref{fig:4-5crochet} for illustrations of the $(4,5)$ case. Type $Y$ tiles are represented as quadrilaterals with $N$ vertices, and type $Z$ tiles are represented as triangles with $N$ vertices. Because a $Y$ tile has $N-3$ sides available to connect with a tile of higher rank, a rank $k$ $Y$ tile produces $N-3$ $Y$ tiles of rank $k+1$. Then, since tiles must meet $M$-to-a-vertex, there must be $M-3$ $Z$ tiles between every pair of $Y$ tiles, and there must be $M-4$ type $Z$ tiles following the last $Y$. Similarly, a $Z$ tile has $N-2$ edges free to connect to a tile of higher rank, so a rank $k$ $Z$ tile produces $N-2$ $Y$ tiles of rank $k+1$, again with $Z$ tiles appropriately interspersed.

This crochet pattern tiles the hyperbolic plane with $M$ $N$-gons meeting at every vertex. From this tiling, we obtain the $(M,N)$-tiling by considering the points in the tiling becoming $M$-gons, as in Figure \ref{fig:kaleido} (compare with \cite{symm}). The described crochet pattern translates to the replacement rules given above.
\end{proof}

We can now compute the formulas for the number of $M$- and $N$-gons of any rank, as well as for the cyclic permutation of $M$- and $N$-gons of any rank.
\begin{claim}
The formulas for $q_k,p_k,j_k,l_k$ hold as stated in Theorem 3.
\end{claim}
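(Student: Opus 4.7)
My plan is to reduce all four counts in Theorem 3 to a single $2 \times 2$ linear recurrence built from the crochet pattern of Claim 8, solve it by diagonalization, and then extract each formula by appropriate bookkeeping. First, the replacement rules $Y \to (YZ^{M-3})^{N-4}YZ^{M-4}$ and $Z \to (YZ^{M-3})^{N-3}YZ^{M-4}$ immediately encode the transition
$$\begin{pmatrix} y_k \\ z_k \end{pmatrix} = A \begin{pmatrix} y_{k-1} \\ z_{k-1} \end{pmatrix}, \qquad A = \begin{pmatrix} N-3 & N-2 \\ (N-3)(M-3)-1 & (N-2)(M-3)-1 \end{pmatrix}.$$
A short calculation gives $\operatorname{tr}(A) = (M-2)(N-2)-2 = b$ and $\det(A) = 1$, so the characteristic polynomial is $\lambda^2 - b\lambda + 1$. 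Squaring the defining identity $\alpha_i^2 = \sqrt{b-2}\,\alpha_i + 1$ yields $\alpha_i^4 = b\alpha_i^2 - 1$, identifying $\alpha_1^2$ and $\alpha_2^2$ as the two eigenvalues of $A$.

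I would then take the initial data $(y_1, z_1) = (0, M)$: although a rank $1$ $N$-gon is strictly speaking neither $Y$ nor $Z$, a direct count around the table shows that each produces $N-2$ rank $2$ $Y$-children and $(N-2)(M-3)-1$ rank $2$ $Z$-children, exactly what a $Z$-tile would produce under the crochet pattern. Diagonalizing $A$ via its eigenvectors $(N-2,\ \alpha_i^2-(N-3))^T$ and expanding $(0,M)^T$ along them gives
$$y_k = \frac{M(N-2)}{\sqrt{b^2-4}}\,\phi_{k-1}, \qquad z_k = \frac{M}{\sqrt{b^2-4}}\bigl(\phi_k - (N-3)\phi_{k-1}\bigr),$$
with $\phi_j := \alpha_1^{2j}-\alpha_2^{2j}$ satisfying $\phi_{j+1} = b\phi_j - \phi_{j-1}$. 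Summing and using $\phi_{k-1}+\phi_k = (b+1)\phi_{k-1}-\phi_{k-2}$ casts $q_k$ into the form stated in Theorem 3. For the $M$-gons, a parallel count via Claim 8 yields $l_k = (N-3)y_k + (N-2)z_k$, which is precisely $y_{k+1}$ (the first coordinate of $A(y_k,z_k)^T$), simplifying via $\phi_k = b\phi_{k-1}-\phi_{k-2}$ to the stated $l_k$.

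For $p_k$ and $j_k$ I would adapt the small-cone argument from the proof of Claim 4. By the $M$-fold rotational symmetry of the tiling around the table, $T$ shifts the rank $k$ layer cyclically by $q_k/M + s_k$ positions, where $s_k$ counts the rank $k$ $N$-gons lying inside a distinguished small cone based at a single vertex of the table. The $Y$- and $Z$-counts inside such a cone satisfy the same matrix recurrence but with a $Y$-tile initial condition corresponding to the tile anchoring the cone; solving expresses $s_k$ as a linear combination of the odd-power differences $\psi_j := \alpha_1^{2j-1}-\alpha_2^{2j-1}$, which also satisfy $\psi_{j+1} = b\psi_j - \psi_{j-1}$. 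Packaging into $(b-1)\psi_{k-1}-\psi_{k-2}$ produces the $p_k$ of Theorem 3, and the parallel count for rank $k$ $M$-gons inside the small cone delivers $j_k$. The rotation number $\rho(t) = \lim_{k\to\infty} p_k/q_k$ then falls out by retaining only the dominant $\alpha_1^{2k}$-contributions in numerator and denominator, simplified via $\sqrt{b^2-4}=\sqrt{b-2}\sqrt{b+2}$.

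The main obstacle is pinning down the two initial conditions: that the $M$ rank $1$ $N$-gons effectively act as $Z$-tiles for the principal recurrence, and that the small-cone sub-recurrence is anchored at a $Y$-tile. Both facts are geometric and follow from inspection of the local tiling near the table and, for the cone, near a single table vertex of the web. Once these starting data are fixed, every formula---including the rotation number---reduces to the same diagonalization of $A$ and routine manipulation of the companion recurrences for $\phi_j$ and $\psi_j$.
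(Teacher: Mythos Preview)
Your treatment of $q_k$ and $l_k$ is correct and matches the paper exactly: same matrix $A$, same initial data $(y_1,z_1)=(0,M)$, same diagonalization; the observation $l_k=(N-3)y_k+(N-2)z_k=y_{k+1}$ is a clean shortcut to the stated formula via $\phi_k=b\phi_{k-1}-\phi_{k-2}$.

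The gap is in your derivation of $p_k$ and $j_k$. Two points:

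\textbf{(i) The small-cone count is not a homogeneous recurrence.} In the paper (and already in the $(3,N)$ argument of Claim~4 that you invoke) the cone count is the cumulative sum
\[
\begin{pmatrix} y_k^s \\ z_k^s \end{pmatrix}=\sum_{i=0}^{k-2} A^{i}\, v_0,
\]
equivalently the \emph{inhomogeneous} recursion $w_k=Aw_{k-1}+v_0$. Geometrically, at each rank the cone picks up a fixed packet of boundary tiles in addition to the crochet descendants of the previous layer. This sum is precisely what turns the even-power differences $\phi_j$ into the odd-power differences $\psi_j$ you want: using $\alpha_i^2-1=\sqrt{b-2}\,\alpha_i$ one has $\sum_{i=0}^{k-2}\alpha^{2i}=(\alpha^{2k-3}-\alpha^{-1})/\sqrt{b-2}$. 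A homogeneous iteration $A^{k-k_0}v$ with any fixed seed $v$ produces only even powers of $\alpha_{1,2}$ and therefore cannot reproduce the stated $p_k$.

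\textbf{(ii) The seed is $(2,M-4)$, not a single $Y$-tile.} The small cone at a vertex of the table opens onto the rank~$1$ $M$-gon opposite the table at that vertex; its $M-2$ outward sides meet exactly the rank~$2$ $N$-gons in the cone, two of which touch the flanking rank~$1$ $N$-gons (hence $Y$-type) while the remaining $M-4$ do not (hence $Z$-type). The $(3,N)$ seed $(1,0)$ you are pattern-matching against refers to a single \emph{two-parent} tile, a type that does not exist when $M\ge 4$.

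So the obstacle you flag at the end is understated: it is not only a matter of fixing initial data for the same recurrence, but of recognizing that the cone obeys $w_k=Aw_{k-1}+v_0$ with $v_0=(2,M-4)$. With that correction the rest of your plan (summing components for $s_k$, then $p_k=q_k/M+s_k$, and $j_k=(N-3)y_k^s+(N-2)z_k^s+1$) goes through as in the paper.
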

\begin{proof}
Denoting the number of $Y$ type and $Z$ type $N$-gons of rank $k$ by $y_k$ and $z_k$, respectively, we obtain the following recursion formula:
\begin{equation} \label{eq:yz-recurrence}
\begin{pmatrix}
  y_{k} \\
  z_{k}
 \end{pmatrix}
=
A \begin{pmatrix}y_{k-1} \\ z_{k-1} \end{pmatrix}
\end{equation}
where the matrix $A$ is given below and is obtained from the rules given in (\ref{eq:crochety}) and (\ref{eq:crochetz}).
\begin{equation}
\label{eq:Amatrix}
A =
\begin{pmatrix}
N-3 & N-2 \\
(M-3)(N-3)-1 & (M-3)(N-2)-1
\end{pmatrix}.
\end{equation}
As mentioned above, the initial conditions are 
$\left(\begin{matrix} 
y_1 \\ z_1
\end{matrix}\right)
=
\left(\begin{matrix} 
0 \\ M
\end{matrix}\right)$. 

Solving the recurrence, we find the general formula:

$$
\begin{pmatrix}
  y_{k} \\
  z_{k}
 \end{pmatrix}
=
\begin{pmatrix}
\frac{M(N-2)(\alpha_1^{2k-2}-\alpha_2^{2k-2})}{\sqrt{b^2-4}} \\
\frac{M((M-3)(N-2)-1)(\alpha_1^{2k-2}-\alpha_2^{2k-2}) + M(\alpha_2^{2k-4}-\alpha_1^{2k-4})}{\sqrt{b^2-4}}
\end{pmatrix},
$$
where 
$$b = (M-2)(N-2)-2,\ 
\alpha_{1} = \frac{\sqrt{b-2} + \sqrt{b+2}}{2},\ 
\alpha_{2} = \frac{\sqrt{b-2} - \sqrt{b+2}}{2}.$$
Then $q_k = y_k + z_k$, so
$$
q_k = \frac{M}{\sqrt{b^2-4}}\left( 
(b+1)(\alpha_1^{2k-2} - \alpha_2^{2k-2}) +
\alpha_2^{2k-4} - \alpha_1^{2k-4}
\right).
$$

Now that we have counted the $N$-gons, we count the $M$-gons of rank $k$ by noticing a pattern in the tiling. 
We see that a type $Y$ $N$-gon of rank $k$ produces $N-3$ $M$-gons of rank $k$, and a type $Z$ $N$-gon produces $N-2$ $M$-gons.
Thus the number of $M$-gons of rank $k$ is given by $l_k = (N-3)y_k + (N-2)z_k$. 
The formula for $l_k$ given in Theorem 3 follows.

Next we determine $p_k$ by counting how many tiles a rank $k$ $N$-gon ``jumps" when $T$ is applied. 
As in the previous section, we define $s_k$ as the number of rank $k$ $N$-gons in a small cone.
We call $y_k^s$ and $z_k^s$ the number of rank $k$ $Y$s and $Z$s, respectively, in the small cone.
Also, as before, applying $T$ to any tile causes the tile to jump over two small cones and one big cone. 
In total, the jump is given by $p_k = s_k + \frac{q_k}{M}$.

We observe that
\begin{equation}
\label{eq:conesum}
\begin{pmatrix}
y_k^s \\ z_k^s
\end{pmatrix}
=
\sum_{i=0}^{k-2} A^i 
\begin{pmatrix}
2 \\ M-4
\end{pmatrix},
\end{equation}
where $A$ is given in (\ref{eq:Amatrix}).

This becomes
$$
\begin{pmatrix}
y_k^s \\ z_k^s
\end{pmatrix}
=
\begin{pmatrix}
\left(\frac{1-\alpha_1^{2k-2}}{1-\alpha_1^2} \right) + 
	\frac{2}{\sqrt{b^2-4}}\left( \frac{1-\alpha_2^{2k-2}}{1-\alpha_2^2}\right)(\alpha_1^2-N+3)
\\ 
\frac{1}{\sqrt{b^2-4}}\left[
\left(\frac{1-\alpha_1^{2k-2}}{1-\alpha_1^2} \right)
(B-\alpha_2^2(M-4))
+ 
\left( \frac{1-\alpha_2^{2k-2}}{1-\alpha_2^2}\right)
(-B+\alpha_1^2(M-4))
\right]
\end{pmatrix},
$$
where
$B = (M-3)(b-2)+(M-4)$.
Then, since $s_k = y_k^s + z_k^s$, we have
$$s_k = \frac{M-2}{(b-2)\sqrt{b+2}}
\left(
(b-1)(\alpha_1^{2k-3} + \alpha_2^{2k-3})
+ \alpha_2^{2k-5} - \alpha_1^{2k-5}
\right)
.$$

This allows us to calculate $p_k$ and we can compute $j_k$ by noticing again that every $Y$ type $N$-gon will be replaced by $N-3$ $M$-gons and every $Z$ type $(N-2)$-gon will be replaced by $N-3$ $M$-gons on the next level and this procedure will leave again only one $M$-gon out, so $j_k=(N-3)y^s_k+(N-2)z^s_k+1$.
\end{proof}

\begin{claim}
The rotation number is given by 
$$\rho(t)=\frac{1}{M} + \frac{M-2}{M\sqrt{b-2}\alpha_1}
\left( \frac{(b-1)\alpha_1^2-1}{(b+1)\alpha_1^2-1} \right).$$
\end{claim}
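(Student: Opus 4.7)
The plan is to imitate the limiting argument already used at the end of the $(3,N)$ case: namely, since the $k$-th layer of $N$-gons provides a discrete approximation of the continuous circle map $t$ at infinity (and $T$ acts on this layer as the cyclic shift $i\mapsto i+p_k$ on $q_k$ elements), the Poincaré rotation number satisfies
\[
\rho(t) \;=\; \lim_{k\to\infty} \frac{p_k}{q_k}.
\]
So the claim reduces to extracting the asymptotics of the closed forms for $p_k$ and $q_k$ that were just obtained in the previous claim.

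The first step is to write $p_k/q_k = 1/M + s_k/q_k$, where
\[
s_k \;=\; \frac{M-2}{(b-2)\sqrt{b+2}}\Bigl((b-1)(\alpha_1^{2k-3}-\alpha_2^{2k-3}) - (\alpha_1^{2k-5}-\alpha_2^{2k-5})\Bigr)
\]
is precisely the $s_k$ (count of rank $k$ $N$-gons in a small cone) from the previous claim, and
\[
q_k \;=\; \frac{M}{\sqrt{b^2-4}}\Bigl((b+1)(\alpha_1^{2k-2}-\alpha_2^{2k-2}) - (\alpha_1^{2k-4}-\alpha_2^{2k-4})\Bigr).
\]
Thus the task is to compute $\lim_{k\to\infty} s_k/q_k$.

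Next, I would factor out the dominant powers $\alpha_1^{2k-5}$ (in $s_k$) and $\alpha_1^{2k-4}$ (in $q_k$) and rewrite each expression in the form
\[
s_k = \frac{(M-2)\alpha_1^{2k-5}}{(b-2)\sqrt{b+2}}\Bigl((b-1)\alpha_1^2 - 1\Bigr) + O(\alpha_2^{2k}),
\]
\[
q_k = \frac{M\alpha_1^{2k-4}}{\sqrt{b^2-4}}\Bigl((b+1)\alpha_1^2 - 1\Bigr) + O(\alpha_2^{2k}).
\]
Here the key observation is that $\alpha_1\alpha_2 = -1$ (from Vieta applied to $\alpha^2-\sqrt{b-2}\alpha-1=0$) together with $\alpha_1 > 1$, which forces $|\alpha_2| < 1$, so every $\alpha_2^{\text{large}}$ term decays to $0$.

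Finally, dividing and using $\sqrt{b^2-4} = \sqrt{b-2}\sqrt{b+2}$ to cancel the $\sqrt{b+2}$ factors gives
\[
\lim_{k\to\infty}\frac{s_k}{q_k} \;=\; \frac{M-2}{M\sqrt{b-2}\,\alpha_1}\cdot\frac{(b-1)\alpha_1^2-1}{(b+1)\alpha_1^2-1},
\]
and adding $1/M$ yields the claimed value of $\rho(t)$. None of these steps is an obstacle; the work is purely algebraic simplification, and the only thing one must be slightly careful about is tracking the exponents correctly (the factors of $\alpha_1^{-1}$ and the identity $\sqrt{(b-2)(b+2)}/(b-2) = \sqrt{b+2}/\sqrt{b-2}$) so that the final rational function of $\alpha_1$ comes out precisely as stated.
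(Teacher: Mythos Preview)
Your proposal is correct and follows exactly the paper's approach: the paper's proof is the single sentence ``This results from taking the limit of $\frac{p_k}{q_k}$ as $k\to\infty$,'' and you have simply filled in the algebraic details of that limit computation. The decomposition $p_k/q_k = 1/M + s_k/q_k$, the dominance of the $\alpha_1$ terms via $|\alpha_2|<1$, and the simplification $\sqrt{b^2-4}/(b-2)=\sqrt{b+2}/\sqrt{b-2}$ are all correct and are precisely what the paper leaves to the reader.
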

\begin{proof} This results from taking the limit of $\frac{p_k}{q_k}$ as $k \to \infty$.
\end{proof}

\section{Remarks and Acknowledgments}
\indent \indent The methods used in this paper both for geometrical and counting arguments can be used also for all other tilings with 2-fold symmetries in the vertices and so we believe that similar theorems and observations can be deduced in a more general setting. 

We want to acknowledge the work of two of our colleagues during the program. Stephanie Ger classified the tilings of the hyperbolic plane in terms of symmetry groups, and Ananya Uppal provided us with a Mathematica demonstration that created images that helped to understand the  patterns in the tilings. Their contributions were extremely valuable for our work.

We also want to thank our advisors during  the Summer@ICERM program, Chaim Goodman-Strauss, Sergei Tabachnikov, Ryan Greene and Tarik Aougab, for all the help and the inspiring discussions. Finally, we thank ICERM for providing the opportunity for us to participate in this great program. 

The tiling pictures were created using \textit{KaleidoTile} application created by Jeff Weeks \url{http://www.geometrygames.org/contact.html}.

\newpage

\end{document}